\newtheorem{theorem}{Theorem}[section]
\newtheorem{lemma}[theorem]{Lemma}
\theoremstyle{definition}
\newtheorem{definition}[theorem]{Definition}
\newtheorem{example}[theorem]{Example}
\theoremstyle{remark}
\newtheorem{remark}[theorem]{Remark}
\theoremstyle{corollary}
\newtheorem{corollary}[theorem]{Corollary}
\theoremstyle{proposition}
\newtheorem{proposition}[theorem]{Proposition}
\newcommand{\R}{\mathbb{R}}
\newcommand{\Cn}{\mathbb{C}^n}
\newcommand{\C}{\mathbb{C}}
\newcommand{\Z}{\mathbb{Z}}
\newcommand{\N}{\mathbb{N}}
\newcommand{\abs}[1]{\left|#1\right|}
\newcommand{\Dbar}{\bar D}
\DeclareMathOperator{\real}{Re}
\DeclareMathOperator{\imag}{Im}
\newcommand{\im}{\operatorname{Im}}
\newcommand{\re}{\operatorname{Re}}
\begin{document}
\title{Level sets of certain subclasses of $\alpha$-analytic functions}

\author[Abtin~Daghighi]{Abtin~Daghighi}
\address{Department of Mathematics, Mittuniversitet, Holmgatan 10, SE-851 70 Sundsvall, Sweden}
\email{Abtin.Daghighi@miun.se}
\author[Frank~Wikstr{\"o}m]{Frank~Wikstr{\"o}m }
\address{Center for Mathematical Sciences, Lund University, Box 118,
SE-221 00 Lund, Sweden}
\email{Frank.Wikstrom@math.lth.se}

\begin{abstract}
For an open set $V\subset\Cn$, denote by $\mathscr{M}_{\alpha}(V)$
  the family of $\alpha$-analytic functions that obey a boundary
  maximum modulus principle. We prove that, on a bounded domain
  $\Omega\subset \Cn$, with continuous boundary (that in each variable separately allows a solution to the Dirichlet problem), a function $f\in
  \mathscr{M}_{\alpha}(\Omega\setminus f^{-1}(0))$ automatically
  satisfies $f\in \mathscr{M}_{\alpha}(\Omega)$, if it is
  $C^{\alpha_j-1}$-smooth in the $z_j$ variable, $\alpha\in \Z_+^n$ up to the boundary.
  For a submanifold $U\subset \Cn$, denote by
  $\mathfrak{M}_{\alpha}(U),$ the set of functions locally
  approximable by $\alpha$-analytic functions where each approximating
  member and its reciprocal (off the singularities) obey the boundary
  maximum modulus principle.  We prove, that for a $C^3$-smooth
  hypersurface, $\Omega$, a member of $\mathfrak{M}_{\alpha}(\Omega)$,
  cannot have constant modulus near a point where the Levi form has a
  positive eigenvalue, unless it is there the trace of a polyanalytic
  function of a simple form. 
\end{abstract}

\subjclass[2000]{Primary 
35G05,  
32A99
} 
\keywords{Polyanalytic functions, $q$-analytic functions, zero sets, level sets, $\alpha$-analytic functions}
\maketitle

\section{Introduction}

A higher order generalization of the holomorphic functions are the
solutions of the equation $\frac{\partial^q}{\partial \bar{z}^q}f=0$,
for a positive integer $q$. These functions are called
\emph{polyanalytic functions of order $q$} or simply
\emph{$q$-analytic functions}.  An excellent introduction to
polyanalytic functions can be found in the survey article by
Balk~\cite{ca1}. 
%
%
A higher dimensional generalization of $q$-analytic
functions is the set of \emph{$\alpha$-analytic functions}, $\alpha\in
\Z_+^n.$ In this paper, we consider the set of function which can be
locally uniformly approximated by $\alpha$-analytic functions
satisfying a boundary maximum modulus principle. We prove an extension
of Rad\'o's theorem for such functions on complex manifolds.
Secondly, we prove, for a special subclass on (not necessarily
complex) submanifolds, a generalization of the fact that $q$-analytic
functions cannot have constant modulus on open sets unless they are of
a particularly simple form.

\subsubsection*{Results} 

Our main results are Theorem~\ref{radoprop1},
Theorem~\ref{propettconv} and Theorem~\ref{jo}.
Theorem~\ref{propettconv} is a generalization of the fact that
$q$-analytic functions cannot have constant modulus on open sets
unless they are of the form $\lambda \overline{Q(z)}/Q(z)$ for some
polynomial $Q$ of degree $<q$ and some constant $\lambda\in\C$. In
particular we consider families, 
that can be locally uniformly approximated by $\alpha$-analytic functions, which themselves and and their reciprocals (except at singularities) satisfy a boundary maximum modulus principle, and we
show that such families cannot have members with constant modulus near
points with at least one positive Levi eigenvalue, unless these
members are the there the trace of an $\alpha$-analytic function of
the form $\lambda \overline{Q(z)}/Q(z)$ for some holomorphic
polynomial $Q(z)$ of degree $<\alpha_j$ in $z_j,1\leq j\leq n$ and
constant $\lambda$. For hypersurfaces, this is done for the
$C^3$-smooth case but we also give a partial generalization for
$C^4$-smooth real submanifolds of higher codimension, see
Theorem~\ref{jo}.
Theorem~\ref{radoprop1}, is that functions in $n$ complex variables
which are $C^{\alpha_j-1}$-smooth in the $z_j$ variable, and which,
off their zero set, are $\alpha$-analytic functions that obey the
boundary maximum modulus principle, extend across their zero set to
functions of the same class.

\section{Preliminaries}

For general background on polyanalytic functions, see Balk~\cite{ca1}
and references therein (in particular we want to mention the earlier
work by Balk \& Zuev~\cite{balk}).

\begin{definition}
  Let $U\subset \C$ be a domain. A function $f : U \to \C$
  is called \emph{$q$-analytic} (or \emph{polyanalytic of order $q$}) if
  it can be written as
  \begin{equation}
    f(z)=\sum_{j=0}^{q-1} a_j(z)\bar{z}^j,a_j\in \mathscr{O}(U).
  \end{equation}
  If $a_{q-1}\not\equiv 0$, then $q$ is called the \emph{exact}
  order.\index{Exact order}
  
  It is well-known, and almost self-evident, that $f(z)=u(x,y)+iv(x,y)$
  of class $C^q(U)$ is $q$-analytic on $U$ if and only if
  \begin{equation}
    \frac{\partial^q f}{\partial\bar{z}^q} = 0 \quad\text{on $U$}.
  \end{equation}
  (see Balk~\cite[p.\,198]{ca1}).
\end{definition}

The $q$-analytic functions behave well under locally uniform convergence.

\begin{proposition}[Balk~{\cite[p.\,206]{ca1}}]\label{unifconv}
  {\it Let $U\subset \C$ be a domain and let $\{f_j\}_{j\in \N}$ be
  polyanalytic functions of the same order $q$, on $U$, such that the
  $f_j$ converge uniformly on $U.$ Then the limit is a polyanalytic
  function of order $q$ on $U$.}
\end{proposition}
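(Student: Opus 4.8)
The plan is to reduce the statement to the differential characterization recorded in the Definition above: a function $g$ of class $C^q(U)$ is $q$-analytic precisely when $\tfrac{\partial^q g}{\partial \bar z^q}=0$ on $U$. Thus it suffices to show that the uniform limit $f=\lim_j f_j$ is itself of class $C^q$ and is annihilated by the operator $L:=\tfrac{\partial^q}{\partial\bar z^q}$. Each $f_j$ is $q$-analytic, so $Lf_j=0$ on $U$. The one genuine obstacle is that uniform convergence of $\{f_j\}$ gives no direct control on the $q$-th order derivatives, so one cannot simply pass the limit through $L$ at the level of classical derivatives; the whole point is to circumvent this.

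First I would reinterpret the convergence distributionally. Uniform convergence $f_j\to f$ on $U$ implies $f_j\to f$ in $L^1_{\mathrm{loc}}(U)$, and hence $f_j\to f$ in the sense of distributions $\mathscr{D}'(U)$. Since the constant-coefficient differential operator $L$ is continuous on $\mathscr{D}'(U)$, we get $Lf_j\to Lf$ in $\mathscr{D}'(U)$. As $Lf_j=0$ for every $j$, this yields $Lf=0$ in $\mathscr{D}'(U)$; that is, $f$ is a distributional solution of $\tfrac{\partial^q f}{\partial\bar z^q}=0$.

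Second, I would invoke regularity. The operator $L=\tfrac{\partial^q}{\partial\bar z^q}$ is the $q$-th power of the elliptic Cauchy--Riemann operator $\tfrac{\partial}{\partial\bar z}$, hence is itself elliptic and therefore hypoelliptic; any distributional solution of $Lf=0$ is automatically $C^\infty(U)$, in particular of class $C^q(U)$, and satisfies the equation classically. Applying the characterization from the Definition, $f$ is then $q$-analytic on $U$, which is the desired conclusion. (One may note that the limit is of order $q$ in the non-exact sense, i.e. $\tfrac{\partial^q f}{\partial \bar z^q}=0$, without claiming the top coefficient survives.) A more hands-on alternative, avoiding hypoellipticity, is to write $f_j(z)=\sum_{k=0}^{q-1}a_{j,k}(z)\bar z^{\,k}$ with $a_{j,k}\in\mathscr{O}(U)$ and to recover the coefficients by integrating $f_j$ against suitable exponentials $e^{-im\theta}$ over small circles, isolating the most negative Fourier modes to extract $a_{j,q-1}$ and then peeling off successively; uniform convergence passes through these bounded integral functionals, the limiting coefficients are holomorphic by Morera/uniform limits in $\mathscr{O}(U)$, and the representation survives in the limit. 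The main technical burden in that route is the bookkeeping of the Fourier extraction, whereas in the distributional route the single crucial input is the hypoellipticity of $L$.
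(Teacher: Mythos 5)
The paper itself gives no proof of Proposition~\ref{unifconv}: it is quoted as a known result from Balk's survey, so there is no internal argument to compare yours against, and your proof must stand on its own. It does. The distributional route is correct and complete: uniform convergence gives $f_j\to f$ in $L^1_{\mathrm{loc}}(U)$ and hence in $\mathscr{D}'(U)$; distributional differentiation is continuous, so $\partial^q f/\partial\bar z^q=0$ in $\mathscr{D}'(U)$; and since the symbol of $(\partial/\partial\bar z)^q$ is $2^{-q}(i\xi-\eta)^q$, which vanishes only at $\xi=\eta=0$, this is a constant-coefficient elliptic operator, hence hypoelliptic, so $f\in C^\infty(U)$ and the equation holds classically. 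The differential characterization quoted in the paper's first definition (a $C^q$ function is $q$-analytic iff $\partial^q f/\partial\bar z^q=0$, citing Balk, p.~198) then yields the claim, and your parenthetical is the right reading of ``order $q$'': exact order may drop in the limit, e.g.\ $\bar z/j\to 0$. Two minor comments. First, if you prefer to avoid invoking general elliptic regularity, you can induct on $q$ using only Weyl's lemma for $\bar\partial$: if $\partial^q f/\partial\bar z^q=0$ distributionally, then by induction $\partial f/\partial\bar z$ is smooth and $(q-1)$-analytic, say $\partial f/\partial\bar z=\sum_{k=0}^{q-2}b_k(z)\bar z^{k}$ with $b_k\in\mathscr{O}(U)$; subtracting the explicit antiderivative $\sum_{k}b_k(z)\bar z^{k+1}/(k+1)$ from $f$ leaves a distributional solution of $\bar\partial h=0$, hence a holomorphic $h$, and this argument has the added benefit of producing the representation $f=\sum_{k=0}^{q-1}a_k(z)\bar z^k$ directly rather than through the $C^q$ characterization. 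Second, your alternative Fourier-extraction sketch is the one closest in spirit to the elementary arguments in Balk's literature, but as written it is only a sketch---the coefficient-extraction functionals and the uniformity of their bounds are exactly where the work lies---so the distributional argument should be regarded as your actual proof.
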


We cannot directly generalize the fact that a holomorphic function
with constant modulus on an open subset must reduce to a constant, to
the case of $q$-analytic functions. A characterization is, however,
known due to Balk~\cite{balle}.

\begin{theorem}[Balk~\cite{balle}]\label{ballcar}
  A polyanalytic function of order $q$ in a domain $\Omega\subset\C$
  has constant modulus if and only if $f$ is representable in the form
  $f(z)=\lambda \cdot \overline{P(z)}/P(z)$, where $P(z)$ is a
  polynomial of degree at most $q-1$, and $\lambda \in \C$ is a
  constant.
\end{theorem}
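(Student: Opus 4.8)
The plan is to prove both implications, treating the reverse direction as routine bookkeeping and concentrating on the forward one. For the reverse direction, if $f=\lambda\,\overline{P}/P$ with $\deg P\le q-1$ and $P$ zero-free on $\Omega$ (which is forced by $f$ being defined there), then $|f|=|\lambda|\,|\overline P|/|P|=|\lambda|$ is constant, and expanding $f(z)=\sum_{k}(\lambda\bar p_k/P(z))\,\bar z^{k}$ exhibits $f$ as $q$-analytic with holomorphic coefficients $\lambda\bar p_k/P(z)$. So the content is the forward direction.

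Assume $|f|\equiv c$. If $c=0$ then $f\equiv 0$ and we take $P\equiv1,\ \lambda=0$; so assume $c>0$, whence $f$ is zero-free and $f\bar f=c^{2}$. Writing $f=\sum_{j=0}^{q-1}a_j(z)\bar z^{j}$ with $a_j\in\mathscr O(\Omega)$, I would first \emph{polarize}: since $f$ is real-analytic, set $F(z,w):=\sum_{j=0}^{q-1}a_j(z)w^{j}$ and $G(z,w):=\sum_{k=0}^{q-1}\overline{a_k(\bar w)}\,z^{k}$, both holomorphic on $\Omega\times\Omega^{*}$, where $\Omega^{*}=\{\bar\zeta:\zeta\in\Omega\}$. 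On the anti-diagonal $w=\bar z$ they restrict to $f$ and $\bar f$, so $FG-c^{2}$ is holomorphic and vanishes on the totally real anti-diagonal $\{(z,\bar z):z\in\Omega\}$, hence $FG\equiv c^{2}$ on the connected set $\Omega\times\Omega^{*}$. The problem is thereby reduced to the algebraic identity $FG=c^{2}$, where $F$ is a polynomial of degree $\le q-1$ in $w$ and $G=c^{2}/F$ is a polynomial of degree $\le q-1$ in $z$.

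The crux is a \emph{separation lemma}: such an $F$ must factor as $F(z,w)=U(w)/V(z)$ with $U,V$ single-variable polynomials of degree $\le q-1$. I would prove this in two steps. First, rationality: evaluating $F=c^{2}/G$ at $q$ distinct values $w_1,\dots,w_q\in\Omega^{*}$ and inverting the Vandermonde system $F(z,w_i)=\sum_j a_j(z)w_i^{j}$ expresses each $a_j(z)$ as a constant-coefficient combination of the functions $F(z,w_i)=c^{2}/G(z,w_i)$, each a reciprocal of a fixed polynomial of degree $\le q-1$ in $z$, hence rational; thus every $a_j\in\C(z)$ and $F\in\C(z,w)$. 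Second, factorization: writing $F=U/V$ in lowest terms in $\C[z,w]$, the fact that $F(z,\cdot)$ has no finite poles in $w$ for each fixed $z\in\Omega$ forces $\deg_w V=0$, i.e.\ $V=V(z)$; symmetrically, $c^{2}/F=G$ being a polynomial in $z$ forces the denominator $U$ of $1/F=V/U$ to be independent of $z$, so $U=U(w)$. The bounds $\deg U,\deg V\le q-1$ then follow from the respective polynomial degrees of $F$ and $G$.

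Finally I would descend to the diagonal. From $F(z,w)=U(w)/V(z)$ we get $f(z)=F(z,\bar z)=U(\bar z)/V(z)=\overline{U^{*}(z)}/V(z)$, where $U^{*}(z):=\overline{U(\bar z)}=\sum_j\bar u_j z^{j}$ is holomorphic. Since $f$ is finite and nonzero on $\Omega$, both $V$ and $U^{*}$ are zero-free there, so $U^{*}/V$ is holomorphic on $\Omega$; moreover $|U^{*}(z)|=c\,|V(z)|$, so $U^{*}/V$ has constant modulus $c$ and is therefore a constant $\gamma$ with $|\gamma|=c$ by the open mapping theorem. Hence $U^{*}=\gamma V$ and $f=\overline{U^{*}}/V=\bar\gamma\,\overline{V}/V$, which is the asserted form with $P:=V$ (degree $\le q-1$) and $\lambda:=\bar\gamma$. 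I expect the \textbf{main obstacle} to be the separation lemma, specifically justifying the joint rationality of $F$ and running the coprime-factorization argument cleanly (ensuring the ``no finite poles in one variable'' step is valid over the open set $\Omega$ rather than all of $\C$); the polarization and the closing modulus argument are standard.
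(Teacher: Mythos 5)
The paper never proves this statement: Theorem~\ref{ballcar} is quoted as a known result of Balk~\cite{balle} and used as a black box (in Lemmas~\ref{hack} and~\ref{hack2}), so there is no internal argument to compare yours against, and your proposal must stand on its own. On its own terms it is essentially correct and self-contained. The polarization step is sound: $F$ and $G$ are holomorphic on $\Omega\times\Omega^{*}$, that product is connected, and a holomorphic function vanishing on the maximally totally real anti-diagonal $\{(z,\bar z):z\in\Omega\}$ vanishes identically, so $FG\equiv c^{2}$. The Vandermonde evaluation does give rationality of each $a_j$, and the separation lemma closes cleanly along the lines you indicate: since $F\in\C(z)[w]$, its reduced denominator divides a common denominator lying in $\C[z]$, hence lies in $\C[z]$ itself (the irreducible factors of a polynomial in $z$ alone are again polynomials in $z$ alone); alternatively your resultant/no-poles argument works once $z_0$ is chosen in the Zariski-dense set $\Omega$, exactly the caveat you flagged. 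The degree bounds are right.

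The one step that genuinely needs repair is the final descent. You assert that ``$f$ finite and nonzero on $\Omega$'' forces $V$ and $U^{*}$ to be zero-free there. That only rules out a zero of one of them which is not a zero of the other; it does not exclude a \emph{common} zero $z_0\in\Omega$ --- and a common zero is exactly the configuration that $|U^{*}|=c\,|V|$ would produce if $V$ vanished, so the case cannot be waved away. The fix uses the continuity of $f$, which your argument never invokes: from $|U^{*}|=c\,|V|$, the two polynomials have equal vanishing order at any common zero, so $U^{*}/V$ extends holomorphically across the finitely many zeros of $V$ in $\Omega$ with modulus $\equiv c$, and the open mapping theorem gives $U^{*}=\gamma V$ on $\Omega$, hence identically since both are polynomials. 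Then $fV=\bar\gamma\,\overline{V}$ on $\Omega$, and if $V$ had a zero $z_0\in\Omega$, writing $V(z)=(z-z_0)^{m}h(z)$ with $h(z_0)\neq 0$ would give $f=\bar\gamma\,\overline{(z-z_0)}^{\,m}\,\overline{h}/\bigl((z-z_0)^{m}h\bigr)$ near $z_0$, which has no limit as $z\to z_0$ because $\overline{(z-z_0)}^{\,m}/(z-z_0)^{m}$ oscillates; this contradicts the continuity of the polyanalytic function $f$. Hence $V$ is zero-free on $\Omega$, which is also exactly what is needed for the final formula $f=\bar\gamma\,\overline{V}/V$ (and, in your ``routine'' converse direction, for $\lambda\overline{P}/P$ with $\lambda\neq 0$) to define a function on all of $\Omega$. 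With this patch the proof is complete.
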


Note that, in particular, the only \emph{entire} polyanalytic
functions with constant modulus are the constant functions.

\begin{definition}[See Balk \cite{balk1965}]\label{deff0}
  Let $U\subset\C$ be a domain and let $p\in E\subset U$.  By
  definition the line $\ell :=\{z\in \C\colon
  z=p+te^{i\theta},\abs{t}<\infty, t\in \R\}$, $p$ and $\theta$
  constants, is a \emph{limiting direction of the set $E$ at $p$} if
  $E$ contains a sequence of points $z_j =p+t_j e^{i\theta_j},$
  $t_j\to 0,\theta_j\to \theta,t_j\neq 0.$ The point $p$ is called a
  \emph{condensation point of order $k$ of $E$} if there are $k$
  different lines through $p$ which are limiting directions of $E$.
\end{definition}

The following uniqueness property is known.

\begin{theorem}[Balk~{\cite[p.\,202]{ca1}}]\label{condens}
  Let $U\subset \C$ be a domain and let $f$ and $g$ be polyanalytic
  functions of order $q$ on $U$. Assume that $E\subset U$ and that $E$
  has a condensation point of order $q$. Then $f\equiv g$ on $E$
  implies $f\equiv g$ on $U$.
\end{theorem}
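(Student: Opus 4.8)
The plan is to reduce to a single vanishing polyanalytic function and then read off its Taylor coefficients at the condensation point. Set $h = f - g$, which by the representation in the definition is again polyanalytic of order $q$ on $U$ and vanishes on $E$; it suffices to prove $h \equiv 0$ on $U$. Let $p$ be the condensation point of order $q$, with distinct limiting lines $\ell_1,\dots,\ell_q$ whose directions we represent by pairwise distinct angles $\theta_1,\dots,\theta_q \in [0,\pi)$, and for each $s$ fix a sequence $z_k^{(s)} = p + t_k^{(s)} e^{i\theta_k^{(s)}} \in E$ with $t_k^{(s)} \neq 0$, $t_k^{(s)} \to 0$, $\theta_k^{(s)} \to \theta_s$, and $h(z_k^{(s)}) = 0$. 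By continuity $h(p)=0$.

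First I would expand $h$ about $p$. Writing $w = z-p$ and using that each $a_j \in \mathscr{O}(U)$ is holomorphic while $\bar{z}^j = (\bar{p}+\bar{w})^j$ contributes powers of $\bar{w}$ only up to $j \leq q-1$, one obtains a locally uniformly convergent expansion $h = \sum_{m\geq 0,\, 0\leq l\leq q-1} b_{ml}\, w^m \bar{w}^l$. Suppose, for contradiction, that $h \not\equiv 0$ in a neighbourhood of $p$, and let $N$ be the least total degree $m+l$ for which some $b_{ml} \neq 0$.

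The core step is to test this against the $q$ sequences. Substituting $w = t_k^{(s)} e^{i\theta_k^{(s)}}$ and $\bar{w} = t_k^{(s)} e^{-i\theta_k^{(s)}}$, the lowest-order contribution is of order $(t_k^{(s)})^N$; dividing $h(z_k^{(s)})=0$ by the nonzero scalar $(t_k^{(s)})^N$ and letting $k\to\infty$ (the remaining tail is $O(t_k^{(s)})$, uniformly in the bounded directions by local uniform convergence) yields, for each $s$,
\[
  \sum_{l=0}^{L} b_{N-l,\,l}\, e^{i(N-2l)\theta_s} = 0, \qquad L := \min(N,\, q-1).
\]
Dividing by $e^{iN\theta_s}$ and setting $\zeta_s = e^{-2i\theta_s}$, this says that the polynomial $P(\zeta) = \sum_{l=0}^{L} b_{N-l,\,l}\, \zeta^l$, of degree at most $L \leq q-1$, vanishes at the $q$ points $\zeta_1,\dots,\zeta_q$. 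Since the lines are distinct, the angles $\theta_s \in [0,\pi)$ are distinct, so the $\zeta_s$ are $q$ distinct points on the unit circle; a nonzero polynomial of degree $<q$ cannot have $q$ roots, hence all degree-$N$ coefficients $b_{N-l,\,l}$ vanish, contradicting the choice of $N$. Therefore $h\equiv 0$ in a neighbourhood of $p$.

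Finally I would propagate this to all of $U$. Applying $\partial^{q-1}/\partial\bar{z}^{q-1}$ to $h=\sum_{j=0}^{q-1} a_j \bar{z}^j$ produces $(q-1)!\,a_{q-1}$, so $a_{q-1}$ vanishes near $p$ and, being holomorphic on the connected domain $U$, vanishes identically; peeling off this term and repeating the argument for $a_{q-2},\dots,a_0$ shows every $a_j \equiv 0$ on $U$, whence $h\equiv 0$. I expect the main obstacle to be the core step: making the passage to the limit rigorous (controlling the higher-order tail uniformly as $t_k^{(s)}\to 0$ with $\theta_k^{(s)}\to\theta_s$ off the exact line) and observing that distinctness of the $q$ lines is precisely what forces the nodes $\zeta_s$ to be distinct, once the $\theta \mapsto \theta+\pi$ ambiguity is removed by the choice of representatives in $[0,\pi)$.
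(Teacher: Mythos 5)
Your proof is correct, but note that there is nothing in the paper to compare it against: the statement is quoted as a known result of Balk (with a citation to the survey, p.~202) and used as a black box in the proofs of Lemma~\ref{hack} and Lemma~\ref{hack2}; the paper contains no proof of it. Your argument is essentially the classical one for Balk's uniqueness theorem: pass to $h=f-g$, expand $h=\sum_{m\geq 0,\,0\leq l\leq q-1}b_{ml}(z-p)^m(\bar z-\bar p)^l$ at the condensation point, isolate the lowest homogeneous degree $N$, and observe that the $q$ distinct limiting lines force the polynomial $P(\zeta)=\sum_{l=0}^{L}b_{N-l,l}\zeta^l$ of degree at most $q-1$ to vanish at the $q$ distinct nodes $\zeta_s=e^{-2i\theta_s}$, hence to vanish identically; then the identity theorem applied successively to the holomorphic coefficients $a_{q-1},\dots,a_0$ propagates $h\equiv 0$ from a neighborhood of $p$ to all of $U$. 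The two points you flag as delicate are indeed the right ones and are handled soundly: absolute, locally uniform convergence of the double series (a finite sum over $l$ of holomorphic Taylor series times polynomials in $\bar z-\bar p$) gives the uniform $O(|t|)$ tail bound after division by $(t_k^{(s)})^N$, and the argument is insensitive to the sign of $t_k^{(s)}$ and to the $\theta\mapsto\theta+\pi$ ambiguity, precisely because $\zeta=e^{-2i\theta}$ is invariant under $\theta\mapsto\theta+\pi$ and injective on lines through $p$, so distinct lines give distinct roots.
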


\subsection*{Polyanalytic functions of several variables}

Avanissian \& Traoré~\cite{avan1},~\cite{avan2} introduced the
following definition of a polyanalytic function of order $\alpha$ in
several variables.

\begin{definition}[Avanissian \& Traoré~\cite{avan1}]\label{avandef}
  Let $\Omega\subset \Cn$ be a domain and let $z=x+iy$ denote
  holomorphic coordinates in $\Cn$.  A function $f\in
  C^{\infty}(\Omega,\C)$ is called \emph{polyanalytic of order $\alpha$} if there exists
  a multi-index $\alpha\in \Z_+^n$ such that in a neighborhood of
  every point of $\Omega$, $\left(\frac{\partial}{\partial
      \bar{z}_j}\right)^{\alpha_j}\!f(z)=0,1\leq j\leq n$. If the
  integer $\alpha_j,1\leq j\leq n$ is minimal, then $f$ is said to be
  \emph{polyanalytic of exact order $\alpha$}.
\end{definition}

\begin{definition}
  A function $f$ is called \emph{countably analytic} on an open set
  $\Omega\subset \Cn$ if $f$ has a local expansion near every point
  $p\in \Omega$ of the form $f(z)=\sum_{\alpha}
  h_{\alpha}(z,p)(\bar{z}-\bar{p})^{\alpha}$ for holomorphic
  $h_{\alpha}(z,p)$ (where $p$ is fixed i.e.\ $h_{\alpha}$ is holomorhic in the variable $z$).
\end{definition}

\begin{definition}
  Let $\Omega\subset\Cn$ be an open subset and let $(z_1,\ldots,z_n)$
  denote holomorphic coordinates for $\Cn.$ A function $f$ is said to
  be \emph{separately $C^{k}$-smooth with respect to the
    $z_j$-variable}, if for any fixed $(c_1,\ldots,c_{n-1})\in
  \C^{n-1}$ the function
  $f(c_1,\ldots,c_{j-1},z_j,c_j,\ldots,c_{n-1})$ is $C^{k}$-smooth
  with respect to $\re z_j, \im z_j.$
\end{definition}

\section{$\alpha$-analyticity across zero sets for a special family} 
\label{radosec}

We cannot hope for a strong maximum principle for $q$-analytic
functions. Take for example the $2$-analytic function $f(z)=1-z\bar{z}$ on $\C,$
which not only attains a strict local maximum at the origin but in fact
vanishes on the boundary of the unit disc, thus it is certainly not
determined by its boundary values.

\begin{definition}\label{bmp}
  Let $\Omega\subset \Cn$ be a submanifold and let
  $\mathfrak{M}\subset C(\Omega,\C)$ be a family of functions.  Let
  $D$ denote the unit disc, $D:=\{\zeta\in \C:\abs{\zeta}<1\}$.  The
  family $\mathfrak{M}$ is said to have the \emph{one dimensional
    boundary maximum modulus property} (\textsc{1d-bmmp}) if given
  $f\in \mathfrak{M}$, then for any $\psi\in \mathscr{O}(D,\Omega)\cap
  C(\overline{D},\Omega)$,
  \begin{equation}
    \max_{\zeta\in \overline{D}}\abs{f\circ \psi(\zeta)}\leq 
    \max_{\zeta\in \partial D}\abs{f\circ \psi(\zeta)}.
  \end{equation}
\end{definition}
\begin{example}
  Let $\Omega\subset \Cn$ be a complex submanifold. Then
  $\mathscr{O}(\Omega)\subset C(\Omega,\C)$ clearly has the one
  dimensional boundary maximum modulus property.
\end{example}

Using the definition of Avanissian \& Traoré~\cite{avan1}, together
with Theorem~\ref{hartog1} we obtain the following Corollary to
Proposition~\ref{unifconv}.

We shall, in this section, use the following notation:
Let $U\subset\Cn$ be a submanifold. Denote by
$\mathscr{M}_{\alpha}(U)$ the set of restrictions to $U$ of
$\alpha$-analytic functions on $\Cn$ that obey the
one dimensional boundary maximum modulus property of
Definition~\ref{bmp}.

An immediate consequence of the fact that the set of holomorphic
functions obey a strong maximum principle, is that, if $U\subset\Cn$
is a complex submanifold (not necessarily of dimension $n$), then
$\mathscr{O}(U)\subseteq\mathscr{M}_{\alpha}(U)$, but more can be said
(see Example \ref{thx}). For simplicity of notation, let $\Dbar =
\frac{\partial}{\partial \bar{z}}$.
\\
\\
A complex function on $\Cn$ annihilated by the system
$\frac{\partial}{\partial\bar{z}_j}$, $1\leq j\leq n$, on
$\Cn\setminus f^{-1}(0)$ is automatically holomorphic.  This theorem
was proved for $n=1$ by Radó~\cite{t1} already in 1924, and
generalized to $n > 1$ by Cartan~\cite{cartan} in 1952. We shall
generalize this result to the subfamilies, 
$\alpha$-analytic functions which obey the boundary maximum modulus
principle.
Kaufman \cite{kauf} makes use of a maximum principle 
and combines this with an approximation property on the boundary to prove 
Radó's theorem and we shall use a similar method of proof for
the following generalization for the case of bounded 
domains. 

\begin{theorem}\label{radoprop}
  Let $\Omega\subset\C$ be a bounded domain with continuous boundary that allows a solution to the Dirichlet problem. Let $q$ be a positive
  integer and let $f \in C^{q-1}(\overline{\Omega})$ such that $f\in
  \mathscr{M}_q(\Omega\setminus f^{-1}(0)).$ Then $f\in
  \mathscr{M}_q(\Omega).$
\end{theorem}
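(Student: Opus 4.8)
The plan is to split the conclusion $f\in\mathscr{M}_q(\Omega)$ into its two defining requirements: that $f$ obey the one dimensional boundary maximum modulus property of Definition~\ref{bmp} on all of $\Omega$, and that the $q$-analytic function on $\Omega\setminus Z$ (where $Z:=f^{-1}(0)$) of which $f$ is the restriction extend across $Z$ to a $q$-analytic function on $\Omega$. After disposing of the case $f\equiv 0$, one first records that $Z$ has empty interior: if $f$ vanished on an open set, then on an adjacent component of $\Omega\setminus Z$ the $q$-analytic function $f$, together with its derivatives of order $\le q-1$, would vanish along a boundary arc, and the uniqueness statement Theorem~\ref{condens} would force $f\equiv 0$.

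First I would establish the boundary maximum modulus property for $f$ on $\Omega$, and I expect this part to be soft. Let $\psi\in\mathscr{O}(D,\Omega)\cap C(\overline{D},\Omega)$ and suppose, for contradiction, that $M:=\max_{\overline D}\abs{f\circ\psi}$ exceeds $m:=\max_{\partial D}\abs{f\circ\psi}$. Then $M>0$, so for any $t$ with $\max(m,0)<t<M$ the super-level set $G:=\{\zeta\in D:\abs{f\circ\psi(\zeta)}>t\}$ is a nonempty open set with $\overline G\subset D$ (since $\abs{f\circ\psi}\le m<t$ on $\partial D$) on which $\psi$ takes values in $\Omega\setminus Z$ (since $\abs{f\circ\psi}>t>0$ there). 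Let $\zeta_0\in G$ be the global maximizer, so $\abs{f\circ\psi(\zeta_0)}=M>t=\abs{f\circ\psi}|_{\partial G}$. Applying the hypothesis $f\in\mathscr{M}_q(\Omega\setminus Z)$ to the small analytic disks $\eta\mapsto\psi(\zeta_0+r\eta)$ forces $\abs{f\circ\psi}\equiv M$ on a neighborhood of $\zeta_0$; propagating this by the usual open-and-closed argument on the component of $G$ containing $\zeta_0$ contradicts the boundary value $t<M$. Hence $M\le m$. This step uses only the continuity of $f$ and the maximum modulus property off $Z$.

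The substance of the theorem is the extension of $f$ across $Z$, and here the hypotheses $f\in C^{q-1}(\overline\Omega)$ and Dirichlet solvability enter. Since $\Dbar^{q}=\Dbar(\Dbar^{q-1})$ is elliptic, it suffices to prove $\Dbar^{q}f=0$ in the sense of distributions on $\Omega$; equivalently, writing $a_{q-1}:=\tfrac1{(q-1)!}\Dbar^{q-1}f$, which is continuous on $\overline\Omega$ by the smoothness hypothesis and holomorphic on $\Omega\setminus Z$, it suffices to show $a_{q-1}\in\mathscr{O}(\Omega)$, i.e. that $Z$ is removable for the continuous function $a_{q-1}$. I would deduce this from the negligibility of $Z$: following Kaufman, I would combine the maximum modulus property just established with the solvability of the Dirichlet problem to show that $\log\abs f$, which is harmonic on $\Omega\setminus Z$ and upper semicontinuous with value $-\infty$ on $Z$, is in fact subharmonic on all of $\Omega$. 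Then $Z=\{\log\abs f=-\infty\}$ is polar, hence of vanishing one dimensional Hausdorff measure, and a closed set of zero length is removable for continuous functions holomorphic in its complement; thus $a_{q-1}$ extends holomorphically, $\Dbar^{q}f=0$, and $f$ is $q$-analytic on $\Omega$.

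I expect the subharmonicity of $\log\abs f$ to be the main obstacle. The natural route is the comparison characterization: on a disc $B\Subset\Omega$, given a harmonic majorant $h=\re H$ of $\log\abs f$ on $\partial B$ with $H\in\mathscr{O}(B)$ supplied by Dirichlet solvability, one wants $\abs{fe^{-H}}\le 1$ throughout $B$. The delicate point is that the maximum modulus property is available for $f$ but not, a priori, for the comparison function $fe^{-H}$, whose modulus is again that of a function $q$-analytic off $Z$; bridging this gap, either by showing that the relevant multiplied functions remain in a class to which the super-level-set argument of the second paragraph applies, or by a direct boundary approximation of $f$ by members of $\mathscr{M}_q(\Omega)$ followed by Proposition~\ref{unifconv}, is where the real work lies. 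Once $\log\abs f$ is subharmonic and the extension is in hand, the two parts combine to give $f\in\mathscr{M}_q(\Omega)$.
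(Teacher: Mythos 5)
Your overall reduction---everything hinges on showing that $u:=\Dbar^{q-1}f$, which is continuous on $\overline{\Omega}$ and holomorphic on $U:=\Omega\setminus Z$, is in fact holomorphic on all of $\Omega$---matches the paper's, but the route you propose for this extension cannot work. The pivot of your argument is the claim that $\log\abs{f}$ is harmonic on $\Omega\setminus Z$, and hence (after the subharmonicity step) that $Z$ is polar and of zero length. But $f$ is only $q$-analytic off $Z$, not holomorphic, and the log-modulus of a $q$-analytic function need not be harmonic, nor even subharmonic. Concretely, take $f(z)=z+\bar{z}=2\re z$ on the unit disc $\Omega$: this satisfies every hypothesis of Theorem~\ref{radoprop} with $q=2$ (it is $2$-analytic and $C^{\infty}(\overline{\Omega})$, and for every analytic disc $\psi$ one has $\abs{f\circ\psi}=2\abs{\re\psi}$, whose maximum lies on $\partial D$ because $\re\psi$ is harmonic, so $f$ obeys the \textsc{1d-bmmp}); yet $Z=\{\re z=0\}\cap\Omega$ is a diameter---a non-polar set of positive length---and $\log\abs{f}=\log(2\abs{\re z})$ is strictly superharmonic off $Z$, since $\Delta\log\abs{x}=-1/x^{2}<0$. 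So the chain ``subharmonic $\Rightarrow$ polar $\Rightarrow$ zero length $\Rightarrow$ removable'' fails at the first link, and its intermediate conclusions are simply false for admissible $f$; no metric smallness of $Z$ can be extracted from the hypotheses. The paper never attempts this: it needs only that $Z$ has empty interior. Its engine is Lemma~\ref{flem} (Kaufman's device of bounding $\abs{f}\abs{g}^{j}$, exploiting that $f=0$ on $\partial U\cap\Omega$, and letting $j\to\infty$), which gives a maximum principle for functions holomorphic on $U$ and continuous on $\overline{U}$ controlled by $\partial U\cap\partial\Omega$ alone. Then the Dirichlet hypothesis on $\partial\Omega$ (note that your proposal only invokes Dirichlet solvability on small discs, where it is automatic, so you never actually engage this hypothesis) produces polynomials $P_{j}$ with $\re(P_{j}-u)\to 0$ on $\partial\Omega$; applying Lemma~\ref{flem} to $e^{\pm(P_{j}-u)}$ propagates this convergence into $\Omega$, so $\re u$ and $\im u$ are harmonic on all of $\Omega$, and finally $\Dbar u$, being harmonic and vanishing on the dense open set $\Omega\setminus\partial U$, vanishes identically, i.e.\ $\Dbar^{q}f=0$.

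Two further steps are also broken, though they are more repairable. First, in your boundary-maximum argument you assert that applying the \textsc{1d-bmmp} to small discs around an interior maximizer ``forces $\abs{f\circ\psi}\equiv M$ on a neighborhood of $\zeta_{0}$''. It does not: the \textsc{1d-bmmp} only guarantees that every sufficiently small circle around the maximizer contains \emph{some} point where $\abs{f\circ\psi}=M$; it is a boundary maximum principle, not a strong one, and a set meeting every sufficiently small circle centred at each of its points can perfectly well be a segment, so your open-and-closed argument has nothing to propagate. The paper's Remark~\ref{remarkrado} argues instead at the level of the non-vanishing set: when the maximum of the modulus over a subdomain $V$ is positive, it equals the maximum over the closure of $V\cap\{f\neq 0\}$, and the boundary of that set consists of points of $\partial V$ together with zeros of $f$; since the maximum is positive, it must occur on $\partial V$. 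Second, your justification that $Z$ has empty interior misapplies Theorem~\ref{condens}: that uniqueness theorem requires $f$ to be polyanalytic on a domain \emph{containing} the set $E$, whereas your $E$ is an arc in $\partial(\Omega\setminus Z)$, where $f$ is not known to be polyanalytic. The paper obtains the density of $U$ in $\Omega$ from Lemma~\ref{flem} itself (this is exactly Kaufman's argument), not from a polyanalytic uniqueness theorem.
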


\begin{proof}
  Note that $q=1$ corresponds to the well-known Rad\'o's theorem in one
  complex variable.

\begin{remark}\label{remarkrado}
  If $U\subset \Cn$ a submanifold and $G$ is a continuous function on
  $\overline{U}$ which satisfies the boundary maximum modulus principle on the
  (necessarily open) subset $U\setminus G^{-1}(0)$ then $G$ satisfies
  the boundary maximum modulus principle on $U.$ Indeed, let $V\Subset
  U,$ be a domain. If $V\cap G^{-1}(0)=V$ then, by continuity,
  $\max_{z\in \overline{V}}\abs{G(z)}=0,$ so we are done. If instead
  the (necessarily open) set $V\cap \{G\neq 0\}$ is nonempty, then
  $\max_{z\in \overline{V}}\abs{G(z)}=$ $\max_{z\in \overline{V\cap
      \{G\neq 0\}}}\abs{G(z)}=$ $\max_{z\in \partial V\cap \{G\neq
    0\}}\abs{G(z)}$ $=\max_{z\in \partial V}\abs{G(z)}.$
\end{remark}

By Remark \ref{remarkrado} it is sufficient to show that $f$ extends
to a $q$-analytic function on $\Omega.$ For the sake of clarity we
first prove the case $q=2,$ i.e.\ assume $f\in
\mathscr{M}_q(\Omega\setminus f^{-1}(0))$ (only small modifications
are then required to prove the cases $2<q<\infty$).  By Corollary
\ref{unifconv1} we have that $f$ is $2$-analytic on $\Omega\setminus
f^{-1}(0)$, and by definition we know that $f$ 
satisfies the boundary maximum modulus principle on $\Omega\setminus
f^{-1}(0).$ Set
\begin{equation}
  u(z):=\frac{\partial}{\partial\overline{z}} f(z)=\Dbar f(z).
\end{equation}
Since $f\in C^{2-1}(\Omega),$ the function $u$ is well-defined on
$\Omega.$ Let $U = \Omega \setminus f^{-1}(0)$ for convenience of
notation. By definition, $u$ is holomorphic on $U$. Furthermore, $u =
0$ on the interior of $f^{-1}(0)$ so,
\begin{equation}
  \Dbar u(z) = 0 \qquad \forall z \in \Omega \setminus \partial U.
\end{equation}

We shall need the following lemma.

\begin{lemma}\label{flem}
  Let $g$ be a function continuous on $\overline{U}$ and holomorphic
  on $U$.  Then for all $z\in U$,
\begin{equation}
  |g(z)| \leq \sup_{\zeta \in \partial U \cap \partial \Omega} |g(\zeta)|.
\end{equation}
\end{lemma}

\begin{proof}
  First of all, $\sup_{\partial U\cap \Omega^{\circ} }\left| f\right|
  =0$ since $f=0$ on the given set. Secondly, note that $f$ and $g^j$
  each satisfies the boundary maximum modulus principle applied to
  $U$.  Thus for any $z\in U,$
  \begin{equation}
    \left|f(z)\right| \left|g(z)\right|^j  \leq \left(\sup_{\partial U\cap \overline{\Omega} } \left| f\right|\right)\cdot \left(
      \sup_{\partial U\cap \overline{\Omega}} \left| g\right|
    \right)^j\leq 
    \left(\sup_{\partial U\cap \partial \Omega } \left| f\right|\right)\cdot \left(
      \sup_{\partial U\cap \partial \Omega} \left| g\right|
    \right)^j,
  \end{equation}
  which in turn implies, after taking $1/j$th power and the limit
  as $j\to \infty,$ $\left| g(z)\right|\leq \sup_{w\in \partial
    U\cap \partial \Omega } \left|g(w) \right|,$ $z\in
  \Omega\setminus \partial U.$
\end{proof}
By Lemma \ref{flem},
\begin{equation}\label{ek00}
\left| u(z)\right|\leq \sup_{w\in \partial U\cap \partial \Omega } \left|u (w) \right|, z\in U.
\end{equation}
Also, we know that $\abs{u(z)}=0$ for all $z$ in the interior of
$f^{-1}(0)$. In fact, given Lemma \ref{flem}, a verbatim repetition of
an argument which can be found in e.g.\ Kaufmann \cite{kauf}, proves
that $U$ must be a dense subset of $\Omega$, in particular $f^{-1}(0)$
must have \emph{empty} interior.  This together with
Equation~\eqref{ek00} gives,
\begin{equation}\label{ek01}
  \left| u(z)\right|\leq \sup_{w\in \partial U\cap \partial \Omega } \left|u(w) \right|, z\in \Omega\setminus \partial U.
\end{equation}
Next we show that $\partial u/\partial\bar{z}$ is harmonic, and since
it is zero on a dense open subset of $\Omega$ it vanishes identically.
To see that $\partial u/\partial\bar{z}$ is harmonic, set $u = w+iv,$
and show that $w,v$ are harmonic as follows: 
First of all we can find a sequence of complex polynomials whose real parts converge uniformly on
$\partial \Omega$ to $u$ (a procedure for doing this is described in e.g.\ Boivin \& Gauthier \cite{boivinn}, p.123. In short one solves the Dirichlet problem for the boundary, in order to obtain a harmonic function, say $W$, in $\Omega$ whose continuous boundary values agree with $u.$ Of course it is important to refer to the condition that the boundary be given by a continuous function. Then $W$ can be complemented with its harmonic conjugate in $\Omega$, to a holomorphic function on $\Omega.$ The latter can the be approximated by complex polynomials). 
So let $\{P_j\}_{j\in \N},$ be
a sequence of holomorphic polynomials such that $\real (P_j-u)\to 0$
on $\partial \Omega.$ We have $\abs{e^{P_j-u}}=e^{\real(P_j-u)},$ and
also that $e^{P_j-u}, e^{u-P_j}$ are holomorphic on $U$. Thus the
maximum principle of Equation~\eqref{ek01}) applies to both
$e^{P_j-u}$ and $e^{u-P_j}$. We can choose $P_j$ such that $\abs{P_j
  -u} < \frac{1}{j}$. Consequently,
\begin{equation}\label{ineq0}
  \abs{e^{(P_j(z) -u(z))}} < e^{\frac{1}{j} } \text{ and }\abs{e^{(u(z)- P_j(z))}} < e^{\frac{1}{j} },z\in \partial \Omega , 
\end{equation}
and by the maximum principle of Equation~\eqref{ek01}, the
inequalities \eqref{ineq0} continue to hold in $\Omega$. This however
implies that
\begin{equation}\label{ineqw00}
e^{\frac{1}{j}}>\abs{e^{P_j(z)-u(z)}}=e^{\real(P_j(z)-u(z))}, \forall z\in \Omega.
\end{equation}
After taking logarithms,
\begin{equation}\label{ineqw0}
\frac{1}{j}>\real(P_j(z)-u(z)),\forall z\in \Omega.
\end{equation}

Since the real part of each $P_j$ is harmonic, this uniform
convergence implies that $\real u(z)=w(z)$ is harmonic on
$\Omega$. Analogously one shows that $v$ is harmonic.  It then follows
that $\partial u/\partial\bar{z}$ has harmonic real and imaginary
parts since, $\Delta \frac{\partial u}{\partial\bar{z}} =
\frac{\partial }{\partial x}(\Delta w)- \frac{\partial }{\partial
  y}(\Delta v) +i\left( \frac{\partial }{\partial y}(\Delta w)-
  \frac{\partial }{\partial x}(\Delta v) \right) =0.$ A harmonic
function (in particular the real and imaginary part respectively of
$\frac{\partial u}{\partial\bar{z}}$) vanishing on a dense open subset
vanishes identically thus $\frac{\partial u}{\partial\bar{z}}$
vanishes identically on $\Omega$ i.e.\ $u$ is holomorphic on $\Omega$
meaning that,
\begin{equation}
0=\Dbar u(z)=\Dbar^2 f(z),\forall z\in \Omega,
\end{equation}
i.e.\ $f$ is bianalytic on $\Omega.$ This proves
Theorem~\ref{radoprop} for $q=2.$
\\
\\
Now we can easily adapt the proof to the cases $q>2.$ Assume $f\in
C^{q-1}(\Omega)$ is $q$-analytic on $U$ and as before let
$u(z):=\Dbar^{q-1} f(z)$.  If $f^{-1}(0)\cap \Omega$ has nonempty
interior then also $\Dbar^q f(z)=0$ on $\left( \Omega\cap
  f^{-1}(0)\right)^{\circ},$ thus $\Dbar u(z) =0$ on
$\Omega\setminus\left(\partial f^{-1}(0)\right),$ which is a dense
open subset of $\Omega.$ Applying the same arguments to $u$ as for the
case $q=2$ we obtain that $\Dbar u(z)$ vanishes identically on
$\Omega,$ in particular $u$ is differentiable\footnote{In fact, $u$
  must be $C^{\infty}$-smooth due to a well-known property of the
  $\Dbar$-operator, see e.g.\ Krantz~\cite[p.\,200]{kr}.}  on $\Omega$
thus $\Dbar^{q-1} f(z)$ is well-defined and differentiable on
$\Omega$, meaning that we can write $\Dbar^q f=
\frac{\partial}{\partial\overline{z}}\left(\Dbar^{q-1}
  f(z)\right)=0,\forall z\in \Omega .$ This completes the proof.
\end{proof}

Note the importance of the starting function $f$ to be $C^{q-1}(\Omega)$ instead of merely continuous, namely, we need in the proof for $q>2$
that $u$ be continuous on $\Omega.$ 
\begin{example}
Let $\Omega:=\{\abs{z}<2\}\subset\C,$ and set,
\begin{equation}
f(z):=
\left\{
\begin{array}{ll}
1-z\bar{z} &, \abs{z}\leq 1, \\
z\bar{z}-1 &, 1< \abs{z}< 2.
\end{array}
\right.
\end{equation}
The function $f$ is clearly $2$-analytic on the open subset
$\Omega\setminus f^{-1}(0)$ (which consists of two disjoint domains),
and we have $f\in C^0(\Omega)$, $0=q-2$. However $f$ is \emph{not}
$2$-analytic at any point of $\{\abs{z}=1\}$. 

Note that this example break \emph{both} the regularity assumption
\emph{and} the boundary maximum modulus principle required in
Theorem~\ref{radoprop}. We are not aware of an example of a function
that only fails one of these two assumptions and that cannot be
extended polyanalytically across its zero set.

\end{example}

As was pointed out by Cartan~\cite{cartan}, an extension of Rad\'o's theorem 
to the case of several variables is easy in the presence of
Hartogs' theorem on separately holomorphic functions.  It turns out
that such a multi-variable Hartogs theorem is indeed known for
polyanalytic functions in the sense of Definition~\ref{avandef}.

\begin{theorem}[Avanissian \& Traoré~{\cite[Theorem 1.3, p.\,264]{avan2}}]\label{hartog1}
  Let $\Omega\subset\Cn$ be a domain and let $z=(z_1,\ldots,z_n),$
  denote holomorphic coordinates in $\Cn$ with $\real z=:x, \imag z=y$. Let
  $f$ be a function which, for each $j$, is polyanalytic of order
  $\alpha_j$ in the variable $z_j=x_j+iy_j$ (in such case we shall
  simply say that $f$ is separately polyanalytic of order
  $\alpha$). Then $f$ is jointly smooth with respect to
  $(x,y)$ on $\Omega$ and furthermore is polyanalytic of order
  $\alpha=(\alpha_1,\ldots,\alpha_n)$ in the sense of Definition~\ref{avandef}.
\end{theorem}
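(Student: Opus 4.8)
The plan is to reduce the statement to the classical Hartogs theorem on separately holomorphic functions, by peeling off, one variable at a time, the holomorphic coefficients in the polyanalytic representation and checking that this extraction never destroys the good properties in the variables not yet processed. First I would fix all variables but $z_1$ and invoke the one-variable representation built into the definition of a $q$-analytic function: for each fixed $z'=(z_2,\dots,z_n)$ the slice $z_1\mapsto f(z)$ is polyanalytic of order $\alpha_1$, hence can be written as $f(z)=\sum_{k=0}^{\alpha_1-1} a^{(1)}_k(z_1;z')\,\bar z_1^{\,k}$ with each $a^{(1)}_k$ holomorphic in $z_1$. Uniqueness of this representation (if $\sum_k a_k\bar z_1^{\,k}\equiv 0$ with the $a_k$ holomorphic, then applying $\partial/\partial\bar z_1$ repeatedly forces all $a_k\equiv 0$) makes each $a^{(1)}_k$ a well-defined function of $(z_1,z')$. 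Applying $\partial/\partial\bar z_1$ up to order $\alpha_1-1$ gives a triangular linear system whose back-substitution expresses every $a^{(1)}_k$ as a \emph{universal} linear combination, with coefficients that are polynomials in $\bar z_1$ alone, of the derivatives $(\partial/\partial\bar z_1)^m f$, $0\le m\le\alpha_1-1$.

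From this explicit formula I would read off two facts. Since $\partial/\partial\bar z_1$ commutes with $(\partial/\partial\bar z_j)^{\alpha_j}$, each derivative $(\partial/\partial\bar z_1)^m f$ is, for $j\ge 2$, still polyanalytic of order $\alpha_j$ in $z_j$; and as the $\bar z_1$-polynomial coefficients are independent of $z'$, the same holds for $a^{(1)}_k$. Thus each $a^{(1)}_k$ is holomorphic in $z_1$ and separately polyanalytic of order $\alpha_j$ in $z_j$ for every $j\ge 2$. Repeating the construction in $z_2$ writes $a^{(1)}_k=\sum_{k_2} a^{(2)}_{k k_2}\bar z_2^{\,k_2}$, and because $\partial/\partial\bar z_2$ commutes with $\partial/\partial\bar z_1$ the coefficients $a^{(2)}_{k k_2}$ remain holomorphic in $z_1$ while now also being holomorphic in $z_2$ and separately polyanalytic in $z_3,\dots,z_n$. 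Iterating through all $n$ variables and telescoping the expansions yields
\[
  f(z)=\sum_{\beta} a_\beta(z)\,\bar z^{\,\beta},\qquad 0\le \beta_j\le \alpha_j-1,
\]
where each $a_\beta$ is \emph{separately} holomorphic in every one of $z_1,\dots,z_n$. Conceptually this is the assertion that $f$ is the restriction to $\{w=\bar z\}$ of a function $\tilde f(z,w)=\sum_\beta a_\beta(z)\,w^{\beta}$ that is holomorphic in $z$ and polynomial of degree $<\alpha_j$ in $w_j$.

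At this point the classical Hartogs theorem on separately holomorphic functions applies to each $a_\beta$, upgrading it to a \emph{jointly} holomorphic function on $\Omega$. Since the $\bar z^{\,\beta}$ are polynomials in $(x,y)$, the finite sum $f=\sum_\beta a_\beta\,\bar z^{\,\beta}$ is then jointly real-analytic, in particular jointly $C^\infty$, which gives the first conclusion. For the second, differentiating term by term gives
\[
  \Bigl(\tfrac{\partial}{\partial\bar z_j}\Bigr)^{\alpha_j} f
  = \sum_{\beta} a_\beta(z)\,\Bigl(\tfrac{\partial}{\partial\bar z_j}\Bigr)^{\alpha_j}\bar z^{\,\beta}=0,
\]
because each factor $\bar z_j^{\,\beta_j}$ has exponent $\beta_j\le\alpha_j-1$; hence $f$ is polyanalytic of order $\alpha$ in the sense of Definition~\ref{avandef}. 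The genuine input is the classical Hartogs theorem, invoked only at the very end; the part requiring care is the bookkeeping of the second paragraph, namely verifying that the iterated coefficient extraction preserves separate holomorphy in the already-processed variables — this is exactly where the commutativity of the operators $\partial/\partial\bar z_1,\dots,\partial/\partial\bar z_n$ is used, and it is what lets me defer all questions of \emph{joint} regularity until Hartogs supplies it automatically.
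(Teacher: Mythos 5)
The paper itself gives no proof of Theorem~\ref{hartog1}: it is quoted directly from Avanissian \& Traor\'e \cite{avan2}, so your proposal has to stand on its own. It does not, and the failure is concentrated in one step: the claim that, ``since $\partial/\partial\bar z_1$ commutes with $(\partial/\partial\bar z_j)^{\alpha_j}$,'' each derivative $(\partial/\partial\bar z_1)^m f$ is still polyanalytic of order $\alpha_j$ in $z_j$ for $j\geq 2$. The hypothesis supplies only \emph{separate} regularity: $f$ is smooth in $z_1$ for each fixed $z'$, and smooth in $z_j$ for each fixed choice of the remaining variables. Equality of mixed partial derivatives (Schwarz's theorem) requires joint regularity, which is precisely the \emph{conclusion} of the theorem, not an available hypothesis. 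A priori you do not know that $z'\mapsto(\partial/\partial\bar z_1)^m f(z_1,z')$ is even continuous, let alone $\alpha_j$-analytic in $z_j$; the mixed derivative $(\partial/\partial\bar z_j)^{\alpha_j}(\partial/\partial\bar z_1)^m f$ may fail to exist, and when it does exist nothing licenses interchanging the order of differentiation. Mere separate regularity gives no joint control at all: $g(x,y)=xy/(x^2+y^2)$, $g(0,0)=0$, is real-analytic in $x$ for each fixed $y$ and vice versa, yet is not even jointly continuous, and its partial derivative in $x$ is not continuous in $y$. The rigidity that rescues separately holomorphic (and separately polyanalytic) functions is exactly what must be proved by Hartogs-type arguments; your bookkeeping paragraph quietly presupposes it, which makes the argument circular, since the classical Hartogs theorem is only invoked afterwards, for the coefficients $a_\beta$.

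The sound parts of your proposal are the two ends: the one-variable coefficient extraction (the triangular system expressing each $a^{(1)}_k$ as a universal $\bar z_1$-polynomial combination of $\partial_{\bar z_1}^m f$, together with uniqueness of the decomposition) is correct, and the endgame is correct as well --- \emph{if} one knew the coefficients $a_\beta$ were separately holomorphic, the classical Hartogs theorem would make them jointly holomorphic, and term-by-term differentiation of the finite sum $f=\sum_\beta a_\beta\bar z^\beta$ would finish the proof. What is missing is the bridge between them, and it cannot be built by commuting derivatives. Any honest proof must first manufacture some joint regularity (this is where Avanissian \& Traor\'e do the real work, with machinery of the same depth as Hartogs' original theorem: results on separately harmonic and polyharmonic functions, local boundedness via Baire category, subharmonicity lemmas). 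Replacing the derivative formulas by Cauchy-type integral formulas for the coefficients does not escape the problem either, because differentiating such integrals with respect to $z'$ under the integral sign requires uniformity in the integration variable, i.e.\ the same joint control one lacks.
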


We immediately obtain the following consequence of Theorem~\ref{radoprop}.

\begin{theorem}\label{radoprop1}
  Let $\Omega\subset\Cn$ be a bounded domain with continuous boundary that in each variable separately allows a solution to the Dirichlet problem. Let $(z_1,\ldots
  ,z_n)\in \Cn$ denote holomorphic variables. Let
  $\alpha=(\alpha_1,\ldots,\alpha_n)\in \Z_+^n.$ Then any
  function $f$ which is $C^{\alpha_j-1}$-smooth in the $z_j$ variable, for each $j$, up to the boundary
  and which is a member of $\mathscr{M}_\alpha(\Omega\setminus
  f^{-1}(0))$ is automatically a member of $\mathscr{M}_\alpha(\Omega).$
\end{theorem}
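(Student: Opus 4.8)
The plan is to reduce Theorem~\ref{radoprop1} to the one-variable result Theorem~\ref{radoprop} by freezing all but one complex variable and applying the latter slice by slice. First I would fix an index $j$ and a point $c=(c_1,\ldots,c_{j-1},c_{j+1},\ldots,c_n)$ in the remaining $n-1$ coordinates, and consider the slice function $g_c(z_j):=f(c_1,\ldots,c_{j-1},z_j,c_{j+1},\ldots,c_n)$ on the planar slice domain $\Omega_c:=\{z_j\in\C:(c_1,\ldots,z_j,\ldots,c_n)\in\Omega\}$. By hypothesis $f$ is $C^{\alpha_j-1}$-smooth in the $z_j$ variable up to the boundary, so $g_c\in C^{\alpha_j-1}(\overline{\Omega_c})$. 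The assumption that the boundary allows, in each variable separately, a solution to the Dirichlet problem is exactly what lets me invoke Theorem~\ref{radoprop} on each such slice $\Omega_c$.

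The second step is to verify that $g_c$ lies in $\mathscr{M}_{\alpha_j}(\Omega_c\setminus g_c^{-1}(0))$, i.e.\ that off its zero set the slice is polyanalytic of order $\alpha_j$ in $z_j$ and obeys the one-dimensional boundary maximum modulus property. Polyanalyticity in the single variable is inherited immediately: since $f\in\mathscr{M}_\alpha(\Omega\setminus f^{-1}(0))$, on that open set $\bigl(\partial/\partial\bar z_j\bigr)^{\alpha_j}f\equiv 0$, and restricting to the slice gives $\bigl(\partial/\partial\bar z_j\bigr)^{\alpha_j}g_c\equiv 0$ on $\Omega_c\setminus g_c^{-1}(0)$. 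The \textsc{1d-bmmp} is likewise inherited, because any analytic disc $\psi\in\mathscr{O}(D,\Omega_c\setminus g_c^{-1}(0))\cap C(\overline D,\cdot)$ composed with the slice embedding is an analytic disc into $\Omega\setminus f^{-1}(0)$, so the modulus inequality for $f$ transfers verbatim to $g_c$. Thus Theorem~\ref{radoprop} applies and yields $g_c\in\mathscr{M}_{\alpha_j}(\Omega_c)$; in particular, by Remark~\ref{remarkrado} it suffices that $g_c$ extends to an $\alpha_j$-analytic function across its zero set, so $\bigl(\partial/\partial\bar z_j\bigr)^{\alpha_j}g_c=0$ on all of $\Omega_c$.

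The third step assembles these slicewise extensions into a genuine function of all variables. Running the argument for every $j$ and every choice of frozen coordinates shows that $f$ is, on all of $\Omega$, separately polyanalytic of order $\alpha_j$ in each $z_j$ in the sense required by Theorem~\ref{hartog1}. That Hartogs-type theorem then upgrades separate polyanalyticity to joint smoothness and to polyanalyticity of order $\alpha$ in the sense of Definition~\ref{avandef}, so the extended $f$ is genuinely $\alpha$-analytic on $\Omega$. Finally, the boundary maximum modulus property on the full domain follows from Remark~\ref{remarkrado} applied to $f$ on $\Omega$: since $f$ obeys \textsc{1d-bmmp} off its zero set and is continuous, it obeys it on $\Omega$, giving $f\in\mathscr{M}_\alpha(\Omega)$.

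The main obstacle I anticipate is the logical coherence of the slicewise conclusion across varying frozen coordinates. The one-variable theorem is applied on each slice independently, so a priori it only guarantees that $\bigl(\partial/\partial\bar z_j\bigr)^{\alpha_j}f$ vanishes on each slice separately; one must be careful that this slicewise vanishing is genuinely equivalent to the hypothesis of Theorem~\ref{hartog1}, namely separate polyanalyticity as a function on the open set $\Omega$. Since separate polyanalyticity is by definition a purely slicewise condition, this is in fact automatic, but it is worth stating explicitly that no joint regularity is needed as an input to Theorem~\ref{hartog1} beyond what that theorem itself produces. A secondary technical point is ensuring every slice domain $\Omega_c$ inherits the continuous-boundary and Dirichlet-solvability hypotheses of Theorem~\ref{radoprop}; this is precisely what the phrase ``in each variable separately allows a solution to the Dirichlet problem'' is meant to encode, so I would simply invoke that hypothesis at the point where I apply the one-variable result.
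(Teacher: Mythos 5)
Your proposal is correct and follows essentially the same route as the paper's own proof: freeze all but one variable, apply the one-variable Theorem~\ref{radoprop} on each slice, upgrade separate polyanalyticity to joint $\alpha$-analyticity via Theorem~\ref{hartog1}, and recover the boundary maximum modulus property from Remark~\ref{remarkrado}. Your explicit verification that the slice inherits both the $C^{\alpha_j-1}$-regularity and the \textsc{1d-bmmp} (via composing discs with the slice embedding) is slightly more detailed than the paper's, but it is the same argument.
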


\begin{proof}
  Recall that when $\Omega$ is a complex manifold of same dimension as
  the ambient space, $\mathscr{M}_\alpha$ reduces to a subspace of
  $\alpha$-analytic functions which satisfy the boundary maximum
  modulus principle.  Denote for a fixed $c\in \C^{n-1}$,
  $\Omega_{c,k}:=\{ z\in \Omega :z_j=c_j,j<k, z_j=c_{j-1},j>k
  \}$. Consider the function $f_c(z_k):=$
  $f(c_1,\ldots,c_{k-1},z_k,c_{k},\ldots,c_{n-1})$. Note that the
  restriction of a function in $n$ complex variables which satisfies
  the boundary maximum modulus principle, to a complex one-dimensional
  submanifold, must also satisfy the boundary maximum modulus
  principle.  Clearly, $f_c$ is $\alpha_k$-analytic on
  $\Omega_{c,k}\setminus f^{-1}(0)$ for any $c\in \C^{n-1}.$ Since
  $f^{-1}_c(0)\subseteq f^{-1}(0)$, Theorem~\ref{radoprop} applies
  to $f_c$ meaning that $f$ is \emph{separately} polyanalytic of order
  $\alpha_j$ in the variable $z_j, 1\leq j\leq n$. By
  Theorem~\ref{hartog1} the function $f$ must be polyanalytic of order
  $\alpha$ (in the sense of Definition~\ref{avandef}) on $\Omega.$ By
  Remark \ref{remarkrado}, $f$ satisfies the boundary maximum modulus
  principle. This completes the proof.
\end{proof}

\section{A special subfamily, $\mathfrak{M}_{\alpha}(U),$ of local limits on submanifolds $U\subset\Cn$}

\begin{corollary}[to Proposition~\ref{unifconv}]\label{unifconv1}
  Let $\Omega\subset\Cn$ be a domain and let $\{f_j\}_{j\in \N}$ be a
  sequence of polyanalytic functions of order $\alpha\in \Z_+^n,$ such
  that the $f_j$ converge uniformly on $\Omega.$ Then the limit is a
  polyanalytic function of order $\alpha$ on $\Omega.$
\end{corollary}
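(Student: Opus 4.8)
The plan is to reduce the multivariable statement to the one-dimensional case already established in Proposition~\ref{unifconv}, exactly as the proof of Theorem~\ref{radoprop1} reduced Theorem~\ref{radoprop} to several variables. First I would fix an arbitrary point $p\in\Omega$ and, for each coordinate direction $k$, restrict the sequence $\{f_j\}$ to the complex line through $p$ parallel to the $z_k$-axis, holding the remaining coordinates fixed at the values prescribed by $p$. On such a one-dimensional slice, each $f_j$ is polyanalytic of the single order $\alpha_k$ in the variable $z_k$ (this is immediate from Definition~\ref{avandef}, since $(\partial/\partial\bar z_k)^{\alpha_k}f_j=0$ holds on all of $\Omega$ and in particular on the slice). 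The uniform convergence of $\{f_j\}$ on $\Omega$ restricts to uniform convergence of the sliced functions on the corresponding one-dimensional domain.

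Next I would invoke Proposition~\ref{unifconv} on each slice: the uniform limit of polyanalytic functions of the fixed order $\alpha_k$ is again polyanalytic of order $\alpha_k$ in $z_k$. Doing this for every $k$ and every choice of the frozen coordinates shows that the limit function, call it $f$, is \emph{separately} polyanalytic of order $\alpha_k$ in each variable $z_k$, $1\le k\le n$. At this stage one has exactly the hypothesis of the Avanissian--Traoré Hartogs-type theorem, Theorem~\ref{hartog1}.

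Finally I would apply Theorem~\ref{hartog1} to conclude that $f$ is jointly smooth and polyanalytic of order $\alpha=(\alpha_1,\ldots,\alpha_n)$ in the sense of Definition~\ref{avandef} on all of $\Omega$. Since $p$ was arbitrary and polyanalyticity is a local condition, this gives the desired global conclusion.

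The only genuinely delicate point I anticipate is the regularity bookkeeping: Proposition~\ref{unifconv} produces a limit that is polyanalytic on each slice, but to feed the result into Theorem~\ref{hartog1} one wants the separate polyanalyticity to hold with the order $\alpha_k$ uniform across all slices, which it does here because the differential order is fixed in the hypothesis. A uniform limit of $C^\infty$ polyanalytic functions is a priori only continuous, so the smoothness needed to even write $(\partial/\partial\bar z_k)^{\alpha_k}$ must come from the slice-wise representation $f=\sum_{m<\alpha_k}a_m(z_k)\bar z_k^{\,m}$ with holomorphic coefficients rather than from differentiating the limit directly; Theorem~\ref{hartog1} is precisely what upgrades this separate structure to joint smoothness, so I would lean on it rather than attempt to establish joint regularity by hand.
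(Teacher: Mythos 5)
Your proof is correct and follows essentially the same route as the paper's: restrict to one-dimensional coordinate slices, apply Proposition~\ref{unifconv} to conclude the limit is separately polyanalytic of order $\alpha_k$ in each $z_k$, and then invoke Theorem~\ref{hartog1} to upgrade separate polyanalyticity to joint polyanalyticity of order $\alpha$. Your closing remark about why the smoothness must come from Theorem~\ref{hartog1} rather than from differentiating the limit directly is a point the paper leaves implicit, and it is well taken.
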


\begin{proof}
  Let $(z_1,\ldots,z_n)\in \Cn$ denote holomorphic variables with
  respect to which being polyanalytic is defined. Let $1\leq k\leq n.$
  Fixing all variables except $z_k,$ say $(z_1,\ldots,z_n)=$
  $(c_1,\ldots,c_{k-1},z_k,c_k,\ldots,c_{n-1}),$ for some $c\in
  \C^{n-1}$, we know that for any $j\in \N,$ the restriction of $f_j$
  becomes an $\alpha_k$-analytic function in the variable $z_k$ on the
  set $\Omega_{c,k}:=\{ z\in \Cn:z_i=c_i ,i< k, z_i=c_{i-1},i>k\}.$ By
  Proposition~\ref{unifconv} this implies that the uniform limit
  function, which we denote $f,$ of the sequence $\{f_j\}_{j\in \N},$
  is separately polyanalytic of order $\alpha.$ Thus by Theorem~\ref{hartog1} 
  $f$ is polyanalytic of order $\alpha$ (in the sense of
  Definition~\ref{avandef}) on on $\Omega.$
  This completes the proof.
\end{proof}

We shall in this section be interested in the following families of
functions, which in particular includes $C^q$-smooth boundary values of
special $\alpha$-analytic functions ($\abs{\alpha}\leq q$).

\begin{definition}
  Let $V\subset\Cn$ be a domain. Denote by $\mathfrak{M}(V)$ the set
  of countably analytic functions $g$
  which obey the one dimensional boundary maximum modulus property of
  Definition~\ref{bmp} and such that
  $1/g$ has the \textsc{1d-bmmp} on $V\setminus g^{-1}(0)$. 
  Let $U\subset\Cn$ be a real submanifold. Denote by
  $\mathfrak{M}_{\alpha}(U)$ the set of functions $f$ defined on $U$
  with the property that for every $p\in U$ there exists an open set
  $V_p\subset\Cn$ such that $f$ can be uniformly approximated on $U\cap
  V_p$ by $\alpha$-analytic functions in
  $\mathfrak{M}(V_p)$.\footnote{Note that the reciprocals of the 
  approximating functions need not be $\alpha$-analytic, merely
  countably analytic away from their singularities.}
\end{definition}

\begin{example}\label{thx}
  Clearly if $M\subset\Cn$ is an open subset then so is any open
  $U\subset M$ thus, by Proposition~\ref{unifconv},
  $\mathfrak{M}_{\alpha}(U)$ coincides with the set of
  $\alpha$-analytic functions on $U$ which can be locally uniformly
  approximated by $\alpha$-analytic functions that together with their
  reciprocals (where defined) satisfy the one dimensional boundary
  maximum modulus property. If $\Omega\subset\Cn$ is a \emph{complex}
  submanifold (of dimension $\leq n$) then
  $\mathscr{O}(\Omega)\subseteq \mathfrak{M}_{\alpha}(\Omega).$
\end{example}

\begin{example}
  For any open subset $V\subset \Cn,$
  \begin{equation}
    \mathfrak{M}_{(1,\ldots ,1)}(V)=\mathscr{O}(V)
  \end{equation}
  and, with the usual partial ordering of multi-indices (i.e.\
  $\alpha\leq \beta$ if $\alpha_j\leq\beta_j,1\leq j\leq n$), we have
  for any real submanifold $U\subset \Cn,$
  \begin{equation}
    \alpha,\beta\in \Z_+^n ,\alpha\leq \beta\Rightarrow
    \mathfrak{M}_{\alpha}(U)\subseteq \mathfrak{M}_{\beta}(U). 
  \end{equation}
  In particular, the set of restrictions of holomorphic functions to
  $U$ belong to $\mathfrak{M}_{\alpha}(U),$ for any $\alpha\in
  \Z_+^n.$
\end{example}

\begin{example}
  If $U\subset\Cn$ is a generic $CR$ submanifold (see the Appendix)
  then, due to a theorem of Baouendi \& Treves~\cite{bt1} (the special
  case we need is formulated more directly in Boggess \&
  Polking~\cite[Theorem 2.1, \,p.761]{bp}),
  \begin{equation}
    \mathfrak{M}_{(1,\ldots,1)}(U)=\left\{ \text{continuous $CR$ functions on } U\right\},
  \end{equation}
  (for the definition of continuous $CR$ functions on $U,$ see Appendix). 
\end{example}

In contrast to the case of $\alpha$-analytic functions where
$\alpha_j\leq 1,1\leq j\leq n,$ we know that there exists nonconstant
$\alpha$-analytic functions which are real valued as soon as at least
one $\alpha_j >1.$

\begin{example}
  Let $U\subset \Cn$ be a domain, let $(z_1,\ldots,z_n)$ denote
  holomorphic coordinates in $\Cn$ with respect to which being
  polyanalytic is defined and let $\alpha\in \Z_+^n$ such that at
  least one $\alpha_j >1$. Then $z_j\overline{z}_j$ is a real-valued
  function belonging to $\mathfrak{M}_{\alpha}(U).$ More generally let
  $P(z)=\sum_{\abs{\beta}<q} a_{\beta}z^{\beta}$. Then the function
  $\abs{P(z)}^2=\overline{P(z)}\cdot P(z)$ is a polyanalytic function
  of order $\alpha$ for some $\alpha$ with $\abs{\alpha}\leq q.$ Since
  holomorphic polynomials obey the maximum principle (and so do their
  reciprocals where well-defined) on any complex submanifold of $U$ we
  obtain $\abs{P(z)}^2\in \mathfrak{M}_{\alpha}(U).$\footnote{We
    mention that it is straightforward to further define
    $\mathfrak{M}_{(\infty,\ldots,\infty)}(U)$ for countably analytic
    functions and in that case, $\{ g:g=\abs{f(z)}^k,\mbox{ for some
    }f\in\mathscr{O}(U)\}\subset
    \mathfrak{M}{(\infty,\ldots,\infty)}(U).$ We shall however only
    need $\mathfrak{M}_{\alpha}(U)$ for finite order $\alpha$ in this
    text.  }
\end{example}

\begin{example}
  It is also clear that $\mathfrak{M}_{\alpha}(U)$ always contains
  functions which are neither holomorphic nor plurisubharmonic, e.g.\
  the restriction of $(z^5e^{z^2})\cdot\overline{z}^3$ to any
  submanifold $U\subset \C$ belongs to $\mathfrak{M}_4(U).$
\end{example}

\begin{example}
  For $\alpha\in \Z_+^n$ such that at least one $\alpha_j >1$,
  $\mathfrak{M}_{\alpha}$ is in general not closed under
  addition. Take e.g.\ $\alpha=(2,3),n=2$. Then
  $g(z_1,z_2)=(z_1z_2^2){\bar{z_1}} \bar{z}_2^2\in
  \mathfrak{M}_{(2,3)}(\C^2),$ but $(1-g)\notin
  \mathfrak{M}_{(2,3)}(\C^2)$. 
\end{example}

\begin{example}
  The phenomenon that for $U\subset \Cn,$
  $\mathfrak{M}_{(1,\ldots,1)}(U)$ may contain elements that are not
  the restrictions of holomorphic functions, also holds for
  $\alpha\geq (1,\ldots,1)\in \Z_+^n.$ Let $(z_1,\ldots,z_n)\in \Cn,
  n>1,$ be complex coordinates with respect to which being
  $\alpha$-analytic is defined.  $U$ be the real analytic hypersurface
  $U:=\{ z\in \Cn: \imag z_n=0\}.$ It is known (see e.g.\
  Boggess~\cite[\,p.109]{b4}) that any continuous function $f$ on $U$
  which is holomorphic with respect to $z_1,\ldots,z_{n-1},$ can be
  locally uniformly approximated on $U$ by entire functions.  For
  example given any point $p\in U$ there exists an open neighborhood
  $p\in V_p\subset \Cn$ together with a sequence $\{E_{j,p}\}_{j\in
    \N}$ of holomorphic functions such that $E_{j,p}\to f$ on $V_p\cap
  U.$ Take e.g.\ the continuous function $f(z):= \abs{\real z_n}\cdot
  \exp(\sum_{j=1}^{n-1}z_j),z\in U$ and set $g(z)=f(z)\cdot
  \overline{P(z)},z\in U$ where $P(z):=\tilde{P}|_U$ for a holomorphic
  polynomial $\tilde{P}(z)$ of highest power $(\alpha_j-1)$ in the
  variable $z_j,1\leq j\leq n.$ Then on any $V_p\cap U$ as above the
  function $g$ is the uniform limit of $\{ E_{j,p}\cdot
  \overline{P}\}_{j\in \N}$ where clearly each $E_{j,p}\cdot
  \overline{P}$ is $\alpha$-analytic on $V_p,$ thus $g\in
  \mathfrak{M}_{\alpha}(U)$ since $\abs{E_{j,p}\cdot \overline{P}}=$
  $\abs{E_{j,p}}\cdot \abs{P},$ and both $E_j$ and $P$ are holomorphic
  on $V_p.$ However the factor $\abs{\real z_n}$ implies that $g(z)$
  is not the restriction to $U$ of an $\alpha$-analytic function on
  any neighborhood of the origin.
\end{example}

\section{Implication of constant modulus}

We shall now point out that some results for the space
$\mathfrak{M}_{\alpha}(M)$ where $M\subset \Cn$ is a generic $CR$
submanifold (see Appendix), follow immediately from the construction
of one dimensional manifolds attached near a reference point and which
cover an open subset. For clarity we shall begin with the easier case
of hypersurfaces and then give a generalization to higher
codimension. In the case of hypersurfaces, the main tool follows from a
result of Lewy~\cite{lewy} and in higher codimension, the
generalization (involving the so called Levi cone) is given by Boggess
\& Polking~\cite{bp} (see Boggess~\cite{b4} for textbook version).

\subsection{The case of $C^3$-smooth hypersurfaces near $1$-convex points}

\begin{proposition}\label{propettconv}
  {\it Let $M\subset \Cn,n\geq 2,$ be a $C^3$-smooth hypersurface such that
  the Levi form of $M$ at the origin has at least one positive
  eigenvalue.  Let $f\in \mathfrak{M}_{\alpha}(\Omega)$ on a domain
  $\Omega\subset M$ containing the origin.  Then,
\begin{itemize}
\item [(i)] There exists an open subset of $\Cn$ containing an open
  $M$-neighborhood of the origin in its closure, to which $f$ extends
  to a $\alpha$-analytic function (i.e.\ $f$ can be identified near
  the origin as the trace values of an $\alpha$-analytic function).
\item [(ii)] $\abs{f}$ cannot be constant on a domain $0\in
  \omega\subset \Omega$ unless $f$ is near $0$ the trace of some
  $\alpha$-analytic function of the form
  $\lambda\cdot\frac{\overline{Q(z)}}{Q(z)}$ for some constant
  $\lambda$ and holomorphic polynomial $Q(z)$ of degree $<\alpha_j$ in
  $z_j,1\leq j\leq n$.
\end{itemize}
}
\end{proposition}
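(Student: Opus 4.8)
The plan is to combine the analytic-disc technology underlying Lewy's extension theorem with the two maximum-modulus hypotheses built into $\mathfrak{M}_{\alpha}$, and to reduce part~(ii) to the one-variable characterization of Balk (Theorem~\ref{ballcar}). For part~(i) I would first localize: fix the reference point $0$ and choose an open set $V=V_0\subset\Cn$ together with a sequence of $\alpha$-analytic functions $g_k\in\mathfrak{M}(V)$ with $g_k\to f$ uniformly on $M\cap V$. Because the Levi form of $M$ at the origin has a positive eigenvalue, the construction of Lewy~\cite{lewy} (in the form given by Boggess \& Polking~\cite{bp}, see also Boggess~\cite{b4}) produces a family of holomorphic discs $\psi_t\in\mathscr{O}(D,\Cn)\cap C(\overline{D},\Cn)$ with $\psi_t(\partial D)\subset M\cap V$ whose interiors $\psi_t(D)$ sweep out an open set $W\subset\Cn$ lying on the appropriate side of $M$, with $\overline{W}$ containing an open $M$-neighborhood of $0$. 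The \textsc{1d-bmmp} of Definition~\ref{bmp}, applied to the differences $g_j-g_k$, gives $\max_{\overline{D}}\abs{(g_j-g_k)\circ\psi_t}\le\max_{\partial D}\abs{(g_j-g_k)\circ\psi_t}$; since the disc boundaries lie in $M\cap V$, where $g_k\to f$ uniformly, the right-hand side tends to $0$ uniformly in $t$, so $\{g_k\}$ is uniformly Cauchy on $W$. The limit $\tilde f$ is then $\alpha$-analytic on $W$ by Corollary~\ref{unifconv1}, and its trace on the $M$-part of $\partial W$ equals $f$; this is exactly~(i).

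For part~(ii), suppose $\abs{f}\equiv c$ on a domain $0\in\omega\subset\Omega$. Shrinking the disc family so that every $\psi_t(\partial D)$ lies in $\omega$, I would apply the \textsc{1d-bmmp} to the $g_k$ to obtain $\abs{\tilde f}\le c$ on $W$, and apply it to the reciprocals $1/g_k$, which for large $k$ are nonvanishing on the discs (as $\abs{g_k}\to c>0$ there) and obey the \textsc{1d-bmmp} off their singularities by definition of $\mathfrak{M}(V)$, to obtain $\abs{\tilde f}\ge c$ on $W$. Hence $\abs{\tilde f}\equiv c$ on the open set $W\subset\Cn$. If $c=0$ then $\tilde f\equiv 0$ and the conclusion holds trivially, so I assume $c>0$, whence $\tilde f$ is zero-free on $W$.

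It then remains to characterize a zero-free $\alpha$-analytic function of constant modulus on an open subset of $\Cn$. Here I would slice: for fixed generic values of $z_2,\dots,z_n$, the function $z_1\mapsto\tilde f$ is $\alpha_1$-analytic in $z_1$ with constant modulus $c$, so Theorem~\ref{ballcar} represents it as $\lambda(z')\,\overline{Q_1(z_1)}/Q_1(z_1)$ with $Q_1$ a polynomial of degree $<\alpha_1$ in $z_1$ and $\abs{\lambda(z')}=c$; running the same argument in each variable yields a slice-wise product representation. The main obstacle is to assemble these one-variable representations into a single holomorphic polynomial $Q(z)$ of degree $<\alpha_j$ in $z_j$ with $\tilde f=\lambda\,\overline{Q(z)}/Q(z)$. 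I expect to carry this out by showing that the roots in each variable depend holomorphically on the remaining variables and then invoking the separate-analyticity of Theorem~\ref{hartog1} together with the condensation-point uniqueness of Theorem~\ref{condens} to pin down the global factorization; the compatibility of the slice-wise factorizations, rather than the extension step, is where the genuine work lies. Once $\tilde f=\lambda\,\overline{Q}/Q$ on $W$ is established, restricting to the $M$-boundary identifies $f$ near $0$ as the trace of this $\alpha$-analytic function, giving~(ii).
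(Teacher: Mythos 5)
Your strategy for part (i) is essentially the paper's own: the Lewy/Boggess--Polking disc family attached to $M$ near $0$, the \textsc{1d-bmmp} to transfer uniform convergence from the disc boundaries in $M$ to the swept one-sided open set, and Corollary~\ref{unifconv1} to get $\alpha$-analyticity of the limit (this is Lemma~\ref{oobb} in the paper). For part (ii), however, your central step fails as written. You justify applying the \textsc{1d-bmmp} to the reciprocals $1/g_k$ by asserting that for large $k$ the $g_k$ are nonvanishing on the discs ``as $\abs{g_k}\to c>0$ there''. But uniform convergence to $f$, with $\abs{f}\equiv c$, is known only on the disc \emph{boundaries} $\psi_t(\partial D)\subset\omega$; on the disc interiors you only know $g_k\to\tilde f$, and a positive lower bound for $\abs{\tilde f}$ there is precisely what you are trying to prove. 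A function can obey the boundary maximum modulus principle, have boundary modulus $c$, and still vanish inside the disc (already $\zeta\mapsto\zeta$ does), and by definition $1/g_k$ has the \textsc{1d-bmmp} only on $V\setminus g_k^{-1}(0)$, so it cannot be invoked on discs that may meet $g_k^{-1}(0)$. The paper's Lemma~\ref{hack} is built exactly to avoid this circularity, via a dichotomy: if $\abs{F}\equiv C$ on some open subset of $V^+$, then Theorem~\ref{ballcar} on slices together with the condensation-point uniqueness Theorem~\ref{condens} propagates $\abs{F}\equiv C$ to all of $V^+$ with no reciprocals at all; otherwise $\{\abs{F}<C\}$ is dense in $V^+$, and one can pick an open $W$ with $C/2<\abs{F}<C$ and $\partial W\cap\omega$ relatively open, so that $F$, and hence the approximants for large index, are zero-free on $W$; there the reciprocal argument becomes legitimate and forces $\abs{F}\equiv C$ on $W$, a contradiction. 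Some such device is indispensable for your lower bound $\abs{\tilde f}\geq c$.

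A second gap: the passage from the slice-wise representations $\lambda_{k,c}\,\overline{Q_{k,c}(z_k)}/Q_{k,c}(z_k)$ to a single $\lambda\,\overline{Q(z)}/Q(z)$ is the content of the paper's Lemma~\ref{hack2}, and you explicitly leave it as an expectation rather than prove it. The paper's argument is short: define $Q$ slice-wise, note that it is locally bounded and separately a holomorphic polynomial of degree $<\alpha_k$ in each $z_k$, hence jointly a polynomial; then the slice-wise constant $\lambda(z)$ is holomorphic with $\abs{\lambda}\equiv C$ on an open set, hence constant. Your proposed route through holomorphic dependence of the roots is plausible but not carried out, so as submitted part (ii) is incomplete even if one grants the constant modulus of $\tilde f$. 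Finally, a caveat you share with the paper: in part (i) you apply the \textsc{1d-bmmp} to the differences $g_j-g_k$, which is not licensed by the definition of $\mathfrak{M}(V)$ (the paper's own example shows $\mathfrak{M}_{\alpha}$ is not closed under addition); the paper's proof of Lemma~\ref{oobb} makes the same leap implicitly when it asserts that the $P_j$ converge uniformly on the union of the disc interiors, so this is not a point of divergence, but it is a step both treatments leave unjustified.
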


\begin{proof}
  Let $z=(z_1,\ldots,z_n)\in \Cn$ denote the complex variables with
  respect to which being $\alpha$-analytic is defined.  Given a
  unitary matrix $U$, the linear coordinate transformation $z\mapsto
  Uz$ map polynomials in the components of $\bar{z}$ with holomorphic
  coefficients to polynomials in the components of $\overline{Uz}$
  with holomorphic coefficients. In particular for any fixed
  multi-index $\alpha\in \Z_+^n$ and any unitary matrix $U$, there is
  a fixed $\beta\in \Z_+^n$ such that every function $P$ that is
  $\alpha$-analytic with respect to $z$ becomes $\beta$-analytic with
  respect to $Uz$ and 
  $\overline{U}^T$ transforms the coordinates back to $z$ making
  $P(z)=P(\overline{U}^T Uz),$ $\alpha$-analytic.  The proof is based
  on first finding analytic discs attached to $\omega$ and then
  filling out a one-sided open subset of $\Cn$ containing $\omega$ in
  its closure. To prove the existence of the discs we shall first make
  a linear coordinate transformation using a unitary matrix.  Once we
  have the discs, an inverse transformation to $z$ will preserve the
  existence of discs attached near a reference point, because a
  complex one dimensional manifold attached to $\omega$ remains a one
  dimensional manifold attached to $\omega$ after a linear change of
  coordinates. Then we shall use the properties of $f$ (in the
  $z$-variables) in order to obtain that a sequence uniformly
  approximating $f$ near $0$ also converges on the one-sided open
  subset.
  We will need the following lemma.
\begin{lemma}\label{oobb}
  There is a domain $V\subset \Omega$ (chosen sufficiently small)
  containing $0,$ decomposed by $M$ into $V^+,V\cap M,V^-$ with
  $\omega= V\cap M$ containing $0$ such that:
  \begin{enumerate}
  \item There exists a function $F$ which is $\alpha$-analytic on
    $V^+$ such that $F|_{\omega}=f.$
  \item $\max_{\omega} \abs{f}\geq \max_{\overline{V^+}} \abs{F}.$ (In
    particular if $\abs{f}\equiv C$ on $\omega$ then
    $\max_{\overline{V^+}} \abs{F}\leq C$ on $\overline{V^+}$).
  \end{enumerate}
\end{lemma}

\begin{proof}
  The method of proof is that of Lewy~\cite{lewy} and uses filling by
  interiors of analytic discs attached to $M$ near $p$. We describe
  the method in the case of hypersurfaces, based on analytic discs.
  For further details, see e.g.\ Boggess~\cite[\,p.209]{b4}.

  We start from the local graph representation of $M\cap V=\{\imag
  z_n=h(z_1,\ldots ,z_{n-1},\real z_n)\}\cap V$ for a sufficiently
  small open neighborhood $V$ of the origin in $\Cn.$ After a
  change of coordinates, $z\mapsto Uz=(w,\tilde{z})^T\in \C^{n-1}\times \C,
  \tilde{z}=x+iy$, we can diagonalize the Hermitian symmetric matrix
  $S:=\left[\frac{\partial^2h(0)}{\partial w_j\partial \bar{w}_k}
  \right]_{(n-1)\times (n-1)}$ such that $\Lambda:= \bar{U}^T S U
  =$diag$(\lambda_{1},\ldots, \lambda_{n-1})$ for some unitary matrix
  $U$. We may assume that
  \[
  h(w_1,\overbrace{0,\ldots,0}^{n-2},0)=\lambda_{1}\abs{w_1}^2+o(\abs{w}^2,\abs{x}^2) \]
  (where $o(\abs{w}^2,\abs{x}^2)$ denote terms depending on both $w,x$
  which vanish to third order at $0$).  Since the Levi form at $0$ has
  at least one positive eigenvalue we can assume that $\lambda_1>0$
  (after a reordering of the $w$ coordinates). The manifold $M$
  divides $V$ into $V\cap M,$ $V\cap \{y>h(w,x)\}$ and $V\cap
  \{y<h(w,x)\}.$ For a domain $0\in \omega\subset M$, a sufficiently
  small translation of the complex line $\C \times 0 \subset \C
  \times \C^{n-1}$ in the positive $y$-direction intersects
  $\{y>h(w,x)\}$ in a simply connected open subset. More precisely, there
  exists $\epsilon,\delta >0$ such that
  \begin{equation*}
    \begin{split}
      A_{x,y,w_2,\ldots,w_{n-1}} &:= \{(\zeta,w_2,\ldots,w_{n-1},\tilde{z}):\zeta\in
      \C\} \cap \\
      & \qquad
      \{\abs{y}<\epsilon,\abs{x},\abs{w_2},\ldots,\abs{w_{n-1}}<\delta\}
    \end{split}
  \end{equation*}
  is simply connected with its boundary contained in
  $\omega$. Finally, the union of the $A_{x,y,w_2,\ldots,w_{n-1}}\cap
  \{y>h(w,x)\}$ cover an ambient open subset (see example in Figure 1)
  which we denote by $\widetilde{U}.$ Now we transform the coordinates
  back to $(z_1,\ldots,z_n)$ by applying the linear transformation
  given by the matrix $\overline{U}^T.$ The image of each
  $A_{x,y,w_2,\ldots,w_{n-1}}\cap \{y>h(w,x)\}$ is again a complex
  one-dimensional submanifold attached to $\omega$. Furthermore the
  image of $\widetilde{U}$ is an open subset of $\Cn$ containing
  $\omega$ in its closure.  Without loss of generality, we may assume
  that this open subset is $V^+$.

\begin{figure} 
\centering
\includegraphics[scale=0.6]{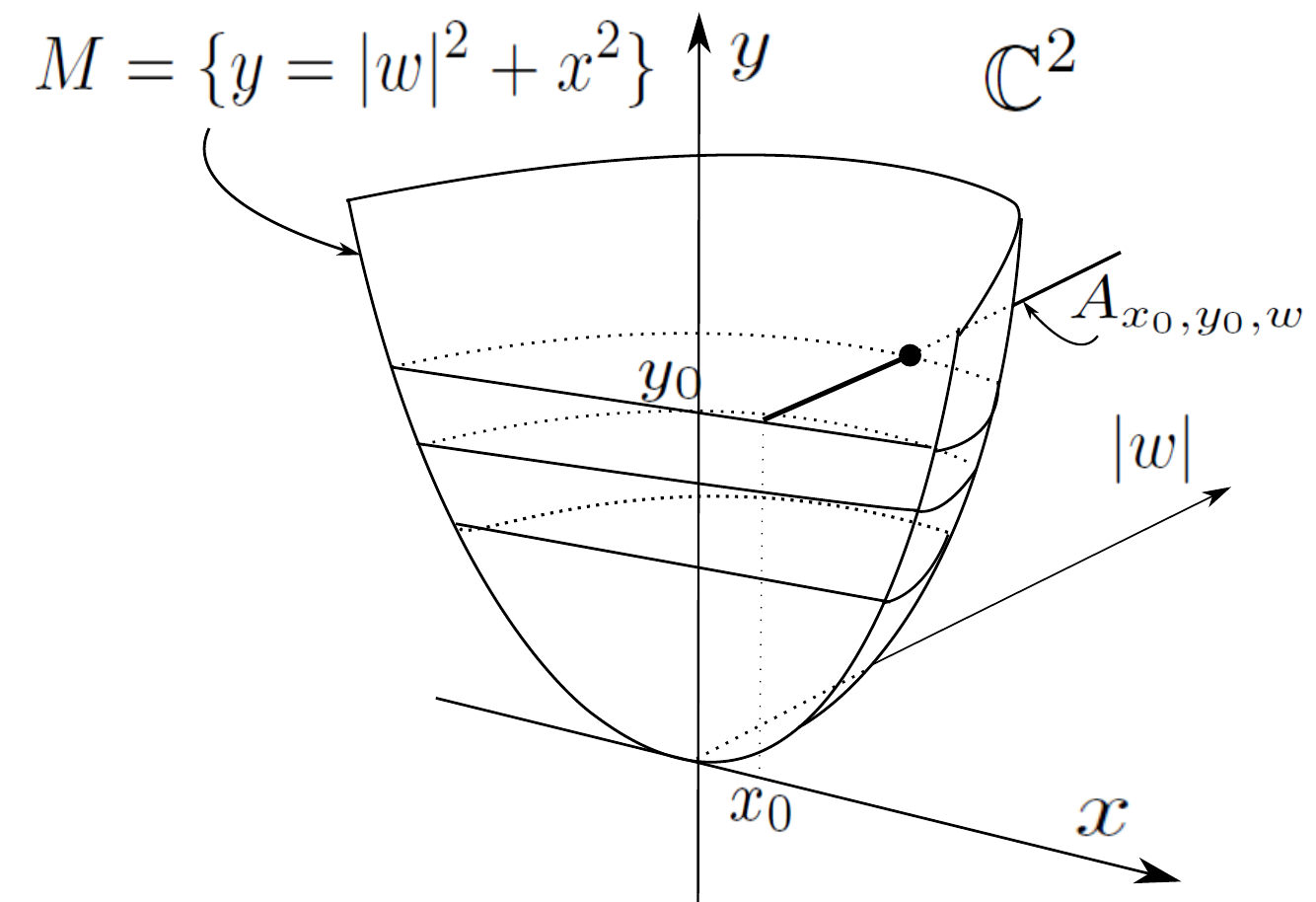}
\caption{Example of filling by one-dimensional complex manifolds (analytic discs attached to
a strictly pseudoconvex hypersurface in $\C^2$. Each $A_{x_0,y_0,w}$ denotes a real two-dimensional manifold, which is also 
a complex manifold of complex dimension one.}
\end{figure}

Now by definition, any function in $\mathfrak{M}_{\alpha}$ can be
approximated locally uniformly on $\omega$ by functions, say
$\{P_j\}_{j\in \N}$, that are $\alpha$-analytic on an open (in $\Cn$)
neighborhood of $\omega$. Furthermore, the restriction of these
functions to each analytic disc $\psi(\overline{D})$ is a function
$P_j\circ \psi$ that obeys the maximum modulus principle. (Here, by
slight abuse of notation, $\psi(\overline{D})$ is the
image, under the reverse transformation with matrix the $\overline{U}^T$, of
the closure of sets $A_{x,y,w_2,\ldots,w_{n-1}}\cap \{y>h(w,x)\}$.)

Thus the $P_j$ converge uniformly on the union of the interiors of the
analytic discs, which contains the open one-sided neighborhood $V^+$
(or in the $Uz$-variables, the part part of $V$ belonging to
$\{y>h(w,x)\}$) and by Theorem~\ref{unifconv} the limit function,
which we denote by $F$, is $\alpha$-analytic on $V^+$ %
Also, the inequalities $\max_{\overline{V^+}}\abs{P_j}\leq$
$\max_{\overline{\omega}}\abs{P_j},$ $\forall j\in \N,$ immediately
imply that $\max_{\overline{V^+}}\abs{F}\leq$
$\max_{\overline{\omega}}\abs{f}.$ This completes the proof of Lemma
\ref{oobb}.
\end{proof}
Part (1) of Lemma~\ref{oobb} proves statement (i).  Also
Lemma~\ref{oobb} immediately takes care of the case $C=0$. We may
therefore assume that
$C>0$.
We continue to work with $(z_1,\ldots,z_n)\in \Cn$ as the coordinates
in $\Cn$ with respect to which being $\alpha$-analytic is defined.

\begin{lemma}\label{hack}
  The extension $F$ in (1) of of Lemma~\ref{oobb} must have constant
  modulus as soon as its trace $F|_{\omega}$ has constant modulus.
\end{lemma}

\begin{proof}
  Assume that there exists an open subset $W\subset V^+$ such
  $\abs{F}\equiv C$ on $W.$ By Theorem~\ref{ballcar} the restriction
  of $F$ to any \[ W_{k,c}:=W\cap \{z_j=c_j,j<k,z_j=c_{j-1},j>k\} \]
  for a fixed $c\in \C^{n-1}$, is a function of the form
  $\lambda_{k,c}\cdot \overline{Q_{k,c}(z_k)}/Q_{k,c}(z_k)$, where
  $Q_{k,c}$ is a polynomial of degree less than $\alpha_k$ and
  $\lambda_{k,c}$ is a complex constant. By continuity of $\abs{F}$ up
  to $\omega$ we deduce that $\abs{\lambda_{k,c}}$ must equal $C$
  independently of $k,c$. Now any open subset has a condensation point
  of order $\alpha_k$ so by Theorem~\ref{condens} the restriction of
  $F$ extends as $\lambda_{k,c}\cdot
  \overline{Q_{k,c}(z_k)}/Q_{k,c}(z_k)$, to the intersection of any
  $\{z_j=c_j,j<k,z_j=c_{j-1},j>k\},$ $1\leq k\leq n$ with $V^+$ which
  passes $W.$ Repeated application of Theorem~\ref{condens} starting
  from that intersection we obtain (using a union of intersections of
  $V^+$ with polydiscs) that the restriction of $F$ to $V^+\cap
  \{z_j=c_j,j<k,z_j=c_{j-1},j>k\},$ $1\leq k\leq n,$ has the form
  $\lambda_{k,c}\cdot \overline{Q_{k,c}(z_k)}/Q_{k,c}(z_k)$. Hence
  $\abs{F}\equiv C$ on $V^+$ and $F$ is an $\alpha$-analytic extension
  of $f$ from $\omega$ so we are done under the assumption that
  $\abs{F}\equiv C$ on some open subset of $V^+.$

  Now, we assume instead that $\abs{F}< C$ on a dense (necessarily
  open) subset of $V^+.$ This implies that there exists an open subset
  $W\subset V^+$ on which $\frac{C}{2}<\abs{F}<C$ such that $\partial
  W\cap \omega$ is relatively open. In particular %
  $\frac{1}{F}$ is well-defined on $\overline{W}$ and since
  $\frac{C}{2}>0$ we can choose a subsequence $\{ P_{j_{l}}\}_{l\in
    \N}$ such that each $P_{j_l}$ is nowhere zero on $W$ and
  $1/P_{j_{l}}\to 1/F$ on $W.$ Hence,
  \begin{equation}\label{hooho}
    \max_{z\in \overline{W}} \abs{\frac{1}{F(z)}}\leq \frac{1}{C}.
  \end{equation}
  But we already know $\max_{z\in \overline{W}}\abs{F(z)}\leq C$
  (where we are using $W\subset V^+$ together with (2) of
  Lemma~\ref{oobb}). So if there were to exist a point $z_0\in
  \overline{W}$ such that $\abs{ F(z_0)} \neq C$, then necessarily
  $\abs{F(z_0)}<C$ which implies $\abs{\frac{1}{F(z_0)}}> \frac{1}{C}$
  (thus incompatible with Equation~\eqref{hooho}). This yields
  $\abs{F}\equiv C$ on $W$, which is a contradiction. This completes
  the proof of Lemma~\ref{hack}.
\end{proof}

Finally we need to verify that the $\alpha$-analytic extension $F$
which has constant modulus in fact must have the form required in
(ii).

\begin{lemma}\label{hack2}
  Let $\mathcal{V}\subset\Cn$ be an open subset with variables
  $z=(z_1,\ldots,z_n)\in \Cn$.  Let $F(z)$ be a function
  $\alpha$-analytic with respect to $z$ such that $\abs{F}\equiv C$ on
  $\mathcal{V}$ for some constant $C>0.$ Then
  $F(z)=\lambda\cdot\frac{\overline{Q(z)}}{Q(z)}$ for some constant
  $\lambda$ and holomorphic polynomial $Q(z).$
\end{lemma}

\begin{proof}
  Let $\mathcal{V}_{c,k}=\mathcal{V}\cap \{
  z_j=c_j,j<k,z_j=c_{j-1},j>k\}.$ By Theorem~\ref{ballcar} the
  restriction to each $\mathcal{V}_{c,k}$ satisfies
  $F(z)=\lambda_{c,k} \overline{Q_{c,k}(z_k)}/Q_{c,k}(z_k),$
  $\abs{\lambda_{c,k}}=C,$ where each $Q_{c,k}(z_k)$ is a holomorphic
  polynomial in one variable of degree $<\alpha_k.$ Obviously the
  function $Q(z)$ defined by,
  \begin{equation}\label{nn0}
    Q(z)=Q_{c,k}(z_k), 
    \forall z\in \mathcal{V}_{c,k}, \forall c\in \C^{n-1},1\leq k\leq n,
  \end{equation}
  is locally bounded and separately a holomorphic polynomial in each
  variable, in particular of order $\alpha_k-1$ in the $z_k$-variable,
  $1\leq k\leq n$.  This implies that $Q(z)$ is jointly a
  polynomial, i.e.\ that it has the representation
  \begin{equation}
    Q(z)=\sum_{\{\beta\in \N^n : \beta_j<\alpha_j ,1\leq j\leq n\}} a_{\beta} z^{\beta}, z\in \mathcal{V}, \text{ some constants } a_{\beta}.
  \end{equation}
  Similarly we conclude that the function, 
  \begin{equation} 
    \lambda(z)=\lambda_{c,k}, 
    \forall z\in \mathcal{V}_{c,k}, \forall c\in \C^{n-1},1\leq k\leq n,
  \end{equation}
  must be holomorphic and since $\abs{\lambda}\equiv C$ on an open
  subset, $\lambda\equiv\text{constant}$ on $\mathcal{V}.$ This means that
  the function $F(z)$ coincides pointwise with the function $\lambda
  \cdot \frac{\overline{Q(z)}}{Q(z)}$ on the open $\mathcal{V}.$ This
  proves Lemma~\ref{hack2}.
\end{proof}

This also completes the proof of Proposition~\ref{propettconv}.
\end{proof}

\subsection{$C^4$-smooth $CR$ submanifolds of $\Cn$ near points with a Levi cone condition}\label{rigid}

The proof of Proposition~\ref{propettconv} relies heavily upon Lemma
\ref{oobb}.  The adaptation to higher codimension follows from a
difficult technical improvement of Lewy's theorem (see Boggess \&
Polking~\cite{bp}), but for our purposes we only require the existence
of a family of analytic discs attached near a reference point. This
result is known (we have chosen to place this citation in the
Appendix) in the case of a $C^4$-smooth $CR$ submanifold $M\subset
\Cn$ near a reference point where the Levi cone has nonempty
interior\footnote{This generalizes the case of a $1$-convex point (by
  which we mean that the Levi form has at least one positive
  eigenvalue) for hypersurfaces, see Appendix.}, see
Lemma~\ref{bpthm}.  A consequence of Lemma~\ref{bpthm} is the
following.
\begin{remark}
  If the convex hull of the image of the Levi form at $p\in M$
  contains an interior point, there exists an open subset
  $\mathcal{V}\subset \Cn$ and a domain $p\in \omega \subset M$ such
  that (i) $\omega\subset \overline{\mathcal{V}}$, (ii) $\mathcal{V}$
  can be covered by the interiors of analytic discs whose boundaries
  are attached to $\omega.$ This immediately yields for $f\in
  \mathfrak{M}_{\alpha}(U)$ with a domain $U\subset M,$ that there
  exists a subdomain $\omega\subset U$ with $p \in \omega$ and an
  associated $\mathcal{V}$ such that
  max$_{z\in\overline{\omega}}\abs{f(z)} \geq$max$_{z\in
    \overline{\mathcal{V}}}\abs{F(z)},$ where $F$ is $\alpha$-analytic
  on $\mathcal{V}$ and $F|_{\omega} =f.$
\end{remark}

\begin{proposition}\label{jo}
  {\it Let $M\subset \Cn,n\geq 2,$ be a $C^4$-smooth $CR$ submanifold and
  let $p\in M$ such that the image of the convex hull of the Levi cone
  at $p$ has nonempty interior. Let $U\subset M$ be an open
  neighborhood of $p$ and $f\in \mathfrak{M}_{\alpha}(U).$ Then
  $\abs{f}$ cannot be constant on a neighborhood of $p$ unless $f$
  extends to an $\alpha$-analytic function of constant modulus (in
  particular of the form $\lambda\frac{\overline{Q(z)}}{Q(z)}$ for
  some holomorphic polynomial $Q(z)$ of degree $<\alpha_j$ in
  $z_j,1\leq j\leq n$) on an open $\mathcal{V}\subset \Cn$ such that
  $\overline{\mathcal{V}}$ contains an $M$-neighborhood of $p$.}
\end{proposition}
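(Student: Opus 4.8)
The plan is to mirror the proof of Proposition~\ref{propettconv} almost verbatim, replacing its single special lemma (Lemma~\ref{oobb}, which rests on Lewy's disc-attachment theorem for hypersurfaces) with the higher-codimension disc-attachment result already quoted in the preceding Remark (a consequence of Lemma~\ref{bpthm}). The hypothesis that the convex hull of the Levi cone at $p$ has nonempty interior is exactly what guarantees the existence of the open set $\mathcal{V}\subset\Cn$, the subdomain $p\in\omega\subset M$ with $\omega\subset\overline{\mathcal{V}}$, and the covering of $\mathcal{V}$ by interiors of analytic discs attached to $\omega$. Thus the geometric input needed for the argument is supplied wholesale, and the analytic part is essentially unchanged.

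First I would invoke the Remark directly: given $f\in\mathfrak{M}_{\alpha}(U)$, pick $\omega\subset U$ with $p\in\omega$ and the associated $\mathcal{V}$ so that $f$ extends to a function $F$ that is $\alpha$-analytic on $\mathcal{V}$ with $F|_{\omega}=f$ and
\begin{equation}
  \max_{z\in\overline{\omega}}\abs{f(z)}\geq\max_{z\in\overline{\mathcal{V}}}\abs{F(z)}.
\end{equation}
The existence of $F$ is obtained, as in Lemma~\ref{oobb}, by taking local uniform approximants $\{P_j\}$ of $f$ that are $\alpha$-analytic near $\omega$, restricting them to the attached discs where each $P_j\circ\psi$ obeys the maximum modulus principle, and applying Theorem~\ref{unifconv} (via Corollary~\ref{unifconv1}) to conclude that the limit $F$ is $\alpha$-analytic on $\mathcal{V}$. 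The displayed inequality comes from passing $\max_{\overline{\mathcal{V}}}\abs{P_j}\leq\max_{\overline{\omega}}\abs{P_j}$ to the limit. This simultaneously yields the extension statement and disposes of the case $C=0$.

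Assuming $\abs{f}\equiv C$ with $C>0$ on a neighborhood of $p$ in $M$, I would next reproduce Lemma~\ref{hack}: the two reciprocal estimates $\max_{\overline{W}}\abs{F}\leq C$ and $\max_{\overline{W}}\abs{1/F}\leq 1/C$, the latter obtained from the \textsc{1d-bmmp} of the reciprocals $1/P_{j_l}$ on a subdomain $W\subset\mathcal{V}$ where $C/2<\abs{F}<C$ and $\partial W\cap\omega$ is relatively open, force $\abs{F}\equiv C$ on $W$ and hence, by Theorem~\ref{ballcar}, Theorem~\ref{condens}, and the connectedness/condensation argument, on all of $\mathcal{V}$. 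Finally I would apply Lemma~\ref{hack2} verbatim to conclude that $F(z)=\lambda\cdot\overline{Q(z)}/Q(z)$ for a constant $\lambda$ and a holomorphic polynomial $Q$ of degree $<\alpha_j$ in $z_j$, which is the asserted form.

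The main obstacle is not analytic but geometric bookkeeping: one must be certain that the disc family furnished by Lemma~\ref{bpthm} genuinely fills an open $\mathcal{V}\subset\Cn$ (rather than merely a lower-dimensional set) and that $\overline{\mathcal{V}}$ contains an $M$-neighborhood of $p$, since in higher codimension the attached discs need not sweep out a full one-sided neighborhood as cleanly as in the hypersurface case. This is precisely the content of the nonempty-interior condition on the convex hull of the Levi cone, and I would lean entirely on Lemma~\ref{bpthm} (and the cited results of Boggess \& Polking~\cite{bp}) to certify it; the remaining steps then transfer without change because they use only the maximum modulus property along one-dimensional complex slices, the uniform-limit stability of $\alpha$-analyticity, and Balk's one-variable characterizations, none of which are sensitive to the codimension of $M$.
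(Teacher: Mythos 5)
Your proposal is correct and follows essentially the same route as the paper: the paper's own proof simply notes that Lemma~\ref{bpthm} makes Lemma~\ref{oobb} valid for $C^4$-smooth generic $CR$ submanifolds with the Levi cone condition, and then repeats the arguments of Proposition~\ref{propettconv} (i.e.\ Lemma~\ref{hack} and Lemma~\ref{hack2}), exactly as you do. Your additional remark that the only real issue is certifying that the attached discs fill an open set $\mathcal{V}\subset\Cn$ is precisely the point the paper delegates to Lemma~\ref{bpthm} and Boggess \& Polking, so nothing is missing.
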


\begin{proof}
  By Lemma~\ref{bpthm} we obtain that Lemma~\ref{oobb} still holds
  if we replace $M$ by a $C^4$-smooth generic submanifold of
  $\Cn$ (of arbitrary positive codimension) and replace $p$ by a point
  in $M$ such that the Levi cone at $p$ has nonempty interior.  The
  remaining arguments of the proof can now be repeated analogously to
  the proof of Proposition~\ref{propettconv}.
\end{proof}

\begin{example}
  Let $M\subset \Cn$ be a generic $C^4$-smooth submanifold and let
  $p\in M$ be such that the interior of the Levi cone at $p$ is
  nonempty.  Proposition~\ref{jo} shows that every continuous $CR$
  function $f$ whose modulus is constant on a neighborhood of a point
  $p\in M$ must have a holomorphic extension (to some open subset, but
  not necessarily a full neighborhood of $p$), again of constant
  modulus. Since holomorphic functions of constant modulus must be
  constant, this implies that $f$ must reduce to a constant near $p$
  on $M.$ This result is known due to Range~\cite{range1} for
  $C^{\infty}$-smooth $M$ and for $p$ of so called finite
  type\footnote{If a generic $CR$ submanifold $M\subset\Cn$ has real
    codimension $d$ then there is associated to every point $p\in M$ a
    set of $d$ integers $m_1,\ldots,m_d$ called Hörmander numbers
    (whose definition involves higher order Lie brackets). $p$ is said
    to be of finite type if all the Hörmander numbers are finite, see
    e.g.\ Boggess~\cite[\,p.181]{b4}.}, but with a different proof. We
  mention that Stoll~\cite{stoll} also deduces a version of this
  result for $C^{\infty}$-smooth boundaries
  but where the method of proof depends on a result of Hakim \&
  Sibony~\cite{haksib}, which in turn depends on working with
  functions $C^{\infty}$ on $\overline{\Omega}$ and in this proof, the
  condition on the boundary being $C^{\infty}$-smooth is necessary,
  see Krantz ~\cite[\,p.5]{kr2010}.
\end{example}



\begin{appendix}
\section{Preliminaries for Lewy's theorem}
A complex structure $J$ on $T \Cn$ 
is defined as a real linear map $J:T\Cn\to T\Cn$
such that $J^2=-Id,$ specifically $J$ is defined fiberwise on the tangent vector spaces by $\R$-linear maps $J_p: T_p \Cn\to T_p \Cn.$
If $M\subset \Cn$ is a submanifold, 
$T^c_p M:=T_p M\cap J_p(T_p M)$ is called
the holomorphic tangent space of $M$ at $p.$ $J$ maps each $T^c_p M$ to itself thus defines a complex structure on $T^c_p M.$
If $T^c_p M$ has constant dimension ($CR$ dimension) for every $p$ then $M$ is called a $CR$ manifold. 
$M$ is called generic if $T_p \Cn =T_p M\oplus J_p(T_p M/T^c_p M).$
The $\R$-linear maps $J_p: T_p^c M\to T_p^c M$
have eigenvalues $\pm i.$
$J$ extends to a $\C$-linear map on the bundle $\C\otimes T^c M=\bigcup_{p\in M} \C\otimes T_p^c M $ such that the extension again has eigenvalues $\pm i.$ This decomposes $\C\otimes T^c M=H^{1,0} M\oplus H^{0,1} M$ namely a $\C$-linear and anti-$\C$-linear part, where we denote by $H^{0,1} M$ the 
anti-$\C$-linear part. 
\begin{definition}
A differentiable function $f$ on $M$ is called $CR$ if it is annihilated by any $C^1$-section $X$ of $H^{0,1} M$ over $M.$
A distribution $f$ is called $CR$ if $Xf=0$ in the weak sense
i.e.\ $\langle f,X^{\mbox{adj}}\phi\rangle =0,\forall \phi\in C^{\infty}_c(M),$
where $X^{\mbox{adj}}$ denotes the adjoint.
\end{definition}
The Levi form at $p$ of $M,$ denoted $\mathcal{L}_{p}$ is defined as
the map $H^{0,1}_p M\to T_p M/T^c_p M,$ $\mathcal{L}_p(X)
=\frac{1}{2i}[\tilde{X},\overline{\tilde{X}}]|_p$mod$\C\otimes T^c_p
M, X\in H^{0,1}_p M,$ where $\tilde{X}$ is any local ambient extension
with $\tilde{X}|_p=X.$ However for practical reasons one usually identifies the image of the Levi form as a subspace of $N_p M:=J(T_p M/T_p^c M)$.
$N_p M$ is called the normal space at $p$ and
is a real manifold with dimension the same as the real codimension of
$M.$ Denote by $\Gamma_p (\subseteq N_pM)$ the cone which constitutes
the convex hull of the image of $\mathcal{L}_p$ (see Boggess \&
Polking~\cite{bp}).  For two subcones $\Gamma_1,\Gamma_2$ of
$\Gamma_p$ we say that $\Gamma_1$ is smaller that $\Gamma_2$ if
$\Gamma_1\cap S_p$ (where $S_p$ denotes the unit sphere in $N_p M$) is
a compact subset of the (relative) interior of $\Gamma_2 \cap S_p.$
Boggess \& Polking~\cite{bp} proved a theorem on holomorphic extension
of continuous $CR$ functions near a point such that the Levi cone at
$p$ has nonempty interior. The domain of extension has the shape of
the product of an open set with a cone.  The proof involves explicit
construction of families of analytic discs by solving a Bishops
equation, such that the center of these discs pass each point of an
open subset of the given normal cone and simultaneously are attached
sufficiently close to $p.$ For our purposes it is precisely the
existence of such families of discs which is useful.

\begin{lemma}[Boggess~{\cite[\,p.207]{b4}} ]\label{bpthm} 
  Let $M\subset \Cn$ be a $C^l,l\geq 4,$ generic embedded $CR$
  submanifold and let $p\in M$ be a point such that the Levi cone at
  $p$ has nonempty interior.  Then for every open neighborhood
  $\omega$ of $p$ and for each cone $\Gamma <\Gamma_p,$ there is a
  neighborhood $\omega_{\Gamma}\subset\omega$ and a positive number
  $\epsilon_{\Gamma}$ such that each point in $\omega_{\Gamma}
  +\{\Gamma\cap B_{\epsilon_{\Gamma}}\}$ is contained in the image of
  an analytic disc whose boundary image is contained in $\omega.$
\end{lemma}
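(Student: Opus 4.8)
The plan is to reduce the statement to Bishop's method of analytic discs, which is the standard route to such attachment and filling results. First I would fix adapted coordinates $(z',w)\in\C^{\,n-d}\times\C^{d}$, where $d$ is the real codimension and $n-d$ the $CR$ dimension, normalizing $p=0$ and $T_0M=\{\im w=0\}$, so that $M$ is locally a graph
\begin{equation}
M=\{(z',w):\im w = h(z',\re w)\},
\end{equation}
with $h$ vanishing to second order at the origin. The quadratic Hermitian part of $h$ in $z'$ represents the Levi form $\mathcal{L}_p$, and the convex hull of its image is the cone $\Gamma_p\subset N_pM$. An analytic disc is a holomorphic $A=(Z',W)\colon\D\to\Cn$, continuous up to $\overline{\D}$, with $A(\partial\D)\subset M$; writing $W=U+iV$, the attachment condition on $\partial\D$ reads $V=h(Z',U)$.

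Next I would set up Bishop's equation. Since boundary values of a harmonic conjugate are given by the normalized Hilbert transform $T_1$ on $\partial\D$, a holomorphic $W$ with prescribed boundary real part satisfies $\re W=-T_1[\im W]+\re W(0)$ on $\partial\D$. Imposing the attachment condition $\im W=h(Z',U)$ turns this into the fixed-point equation
\begin{equation}
U=-T_1\!\left[h(Z',U)\right]+U_0,
\end{equation}
with free data the holomorphic parameter $Z'$ (with small boundary values) and the constant $U_0\in\R^{d}$. Because $h$ vanishes to second order and $T_1$ is bounded on a H\"older space $C^{\gamma}(\partial\D)$, $0<\gamma<1$, for small $Z'$ and $U_0$ the right-hand side is a contraction on a small ball, producing a unique solution $U$ depending smoothly on $(Z',U_0)$. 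The hypothesis $l\ge 4$ is exactly what guarantees enough regularity of $h$ for this solution map and its parameter derivatives to exist in the required H\"older class. Reconstructing $W$, and hence $A$, from its boundary data by the Cauchy integral then yields, for each small $(Z',U_0)$, an analytic disc attached to $M$ whose boundary can be forced to lie in any prescribed neighborhood $\omega$ of $p$.

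The decisive step is to show that the centers $A(0)$ fill the asserted set $\omega_\Gamma+\{\Gamma\cap B_{\epsilon_\Gamma}\}$. I would compute the normal displacement of $A(0)$ off $M$, namely $\im W(0)-h(Z'(0),\re W(0))$, to leading order. Taking $Z'(\zeta)=a\zeta$ with $a\in\C^{\,n-d}$, one has $\im W(0)\approx\frac{1}{2\pi}\int_{0}^{2\pi}h(ae^{i\theta},U_0)\,d\theta$, and the averaging annihilates the holomorphic and anti-holomorphic quadratic terms in $z'$ while retaining the Hermitian part, so the normal displacement equals, up to a positive constant and higher-order terms, $\mathcal{L}_p(a)\in\Gamma_p$. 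Superposing several Fourier modes and varying $U_0$ together with a base translation then produces centers whose normal components range over the convex hull $\Gamma_p$. Since $\Gamma<\Gamma_p$ is a strictly smaller subcone and $\Gamma_p$ has nonempty interior, the linearization of the center map is a submersion onto the directions spanning $\Gamma$, and an open-mapping or implicit-function argument furnishes the neighborhood $\omega_\Gamma$ and the radius $\epsilon_\Gamma$ for which every point of $\omega_\Gamma+\{\Gamma\cap B_{\epsilon_\Gamma}\}$ is a disc center.

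The main obstacle is precisely this quantitative surjectivity: one must simultaneously keep all disc boundaries inside $\omega$ while forcing the centers to sweep out a full open wedge over a neighborhood of $p$, which requires careful control of the parameter-linearization of the Bishop solution map and genuinely uses the nondegeneracy encoded in the assumption that the Levi cone has nonempty interior. This is the technical heart carried out by Boggess \& Polking~\cite{bp} (see Boggess~\cite[p.\,207]{b4}), whose explicit disc construction and estimates I would invoke to complete the argument.
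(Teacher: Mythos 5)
The paper offers no proof of this lemma at all: it is quoted as a known result from Boggess~\cite[p.\,207]{b4} (originating in Boggess \& Polking~\cite{bp}), and the appendix merely describes the method behind it, namely the explicit construction of families of analytic discs by solving Bishop's equation so that the disc centers sweep an open cone while the boundaries stay attached near $p$. Your sketch follows exactly that route (Bishop's equation solved by a contraction in H\"older norms, the Levi form controlling the normal displacement of the centers, then quantitative surjectivity of the center map), and since you defer the technical heart --- the estimates making the center map fill the wedge --- to the very same citation, your proposal is essentially the same approach as the paper's.
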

\end{appendix}

\end{document}